\definecolor{gr}{rgb}{0.7, 1, 0.7}
\definecolor{rr}{rgb}{1, 0.7, 0.7}
\theoremstyle{plain} 
\newtheorem{theorem}{Theorem}[section]
\newtheorem{thmx}{Theorem}
\newtheorem{lemma}[theorem]{Lemma}
\newtheorem{corollary}[theorem]{Corollary}
\newtheorem{proposition}[theorem]{Proposition}
\theoremstyle{definition} 
\theoremstyle{remark} 
\newtheorem{remark}[theorem]{Remark}
\renewcommand{\mathfrak}{\mathbf}
\newcommand{\ignore}[1]{}
\newcommand{\bbC}{\mathbb{C}}
\newcommand{\bbN}{\mathbb{N}}
\newcommand{\bbR}{\mathbb{R}}
\newcommand{\bbZ}{\mathbb{Z}}
\newcommand{\bbD}{\mathbb{D}}
\newcommand{\tl}{\tilde}
\newcommand{\dist}{\operatorname{dist}}
\newcommand{\M}{\mathbb M}
\newcommand{\mbif}{\mu_{\mathrm{bif}}}
\title[]{Accumulation set of critical points of the multipliers in the quadratic family}
\author{Tanya Firsova}
\address{Kansas State University, Manhattan, KS, USA and National Research University Higher School of Economics, Russian Federation}
\email{tanyaf@math.ksu.edu}
\author{Igors Gorbovickis}
\address{Jacobs University, Bremen, Germany}
\email{i.gorbovickis@jacobs-university.de}
\subjclass[2010]{}
\keywords{}
\date{\today}
\begin{document}
\begin{abstract}
A parameter $c_0\in\bbC$ in the family of quadratic polynomials $f_c(z)=z^2+c$ is a \textit{critical point of a period $n$ multiplier}, if the map $f_{c_0}$ has a periodic orbit of period $n$, whose multiplier, viewed as a locally analytic function of $c$, has a vanishing derivative at $c=c_0$. We study the accumulation set $\mathcal X$ of the critical points of the multipliers, as $n\to\infty$. 
This study complements the equidistribution result for the critical points of the multipliers that was previously obtained by the authors.
In particular, in the current paper we prove that the accumulation set $\mathcal X$ is bounded, path connected and contains the Mandelbrot set as a proper subset. 
We also provide a necessary and sufficient condition for a parameter outside of the Mandelbrot set to be contained in the accumulation set $\mathcal X$ and show that this condition is satisfied for an open set of parameters. Our condition is similar in flavor to one of the conditions that define the Mandelbrot set.
As an application, we get that the function that sends $c$ to the  Hausdorff dimension of $f_c$, does not have critical points outside of the accumulation set $\mathcal X$.
\end{abstract}
\maketitle

\section{Introduction}

Consider the family of quadratic polynomials
$$
f_c(z)=z^2+c,\qquad c\in\bbC.
$$
We say that a parameter $c_0\in\bbC$ is a \textit{critical point of a period $n$ multiplier}, if the map $f_{c_0}$ has a periodic orbit of period $n$, whose multiplier, viewed as a locally analytic function of $c$, has a vanishing derivative at $c=c_0$. The study of critical points of the multipliers is motivated by the problem of understanding the geometry of hyperbolic components of the Mandelbrot set.

As it was observed by D.~Sullivan and A.~Douady and J.~Hubbard~\cite{Douady_Hubbard_Orsay_2}, the argument of quasiconformal surgery implies that the multipliers of periodic orbits, viewed as analytic functions of the parameter $c$, are Riemann mappings of the corresponding hyperbolic components of the Mandelbrot set. Existence of analytic extensions of the inverse branches of these Riemann mappings to larger domains can be helpful in estimating 
the geometry of the hyperbolic components as well as the sizes of some limbs of the Mandelbrot set~\cites{Levin_2009,Levin_2011} (see also~\cite{Dezotti_thesis}). 
Critical values of the multipliers are the only obstructions to existence of these analytic extensions.

It is of special interest to obtain uniform bounds on the shapes of hyperbolic components within renormalization cascades. In particular, this motivates the study of the asymptotic behavior of the critical points of period $n$ multipliers as $n\to\infty$. In~\cite{Firsova_Gor_equi} the current authors approached this questions from the statistical point of view and proved that the critical points of the period $n$ multipliers equidistribute on the boundary of the Mandelbrot set, as $n\to\infty$.

More specifically, for each $n\in\bbN$, let $X_n$ be the set of all parameters $c\in\bbC$ that are critical points of a period $n$ multiplier (counted with multiplicities). 
Let $\M\subset\bbC$ denote the Mandelbrot set and let $\mbif$ be its equilibrium measure (or the bifurcation measure of the quadratic family $\{f_c\}$). Let $\delta_x$ denote the $\delta$-measure at $x\in\bbC$. Then

\begin{theorem}\cite{Firsova_Gor_equi}\label{equidistr_theorem}
The sequence of probability measures
$$
\frac{1}{\# X_n}\sum_{x\in X_n}\delta_x
$$
converges to the equilibrium measure $\mbif$ in the weak sense of measures on $\bbC$, as $n\to\infty$. 
\end{theorem}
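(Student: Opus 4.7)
The plan is to apply the pluripotential-theoretic approach to equidistribution of zero divisors, in the spirit of Bassanelli--Berteloot's work on level sets of multipliers. The roadmap has three stages: realize $X_n$ (with multiplicities) as the zero divisor of a polynomial $P_n \in \bbC[c]$; show that the normalized logarithmic potentials $u_n := (\deg P_n)^{-1} \log|P_n|$ converge in $L^1_{\loc}(\bbC)$ to the Green's function $G_{\M}$ of the Mandelbrot set; then take $dd^c$ on both sides, using $\mbif = dd^c G_{\M}$, to recover the stated weak convergence.

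To construct $P_n$, work on the algebraic curve $\mathcal{C}_n := \{(c,z) \in \bbC^2 : f_c^n(z) = z\}$. The multiplier $\Lambda_n(c,z) := (f_c^n)'(z) = \prod_{i=0}^{n-1} 2 f_c^i(z)$ is regular on $\mathcal{C}_n$, and implicit differentiation of the defining relation $f_c^n(z(c)) = z(c)$ along $\mathcal{C}_n$ gives
\[
\frac{d\Lambda_n}{dc}\bigg|_{\mathcal{C}_n} = \partial_c \Lambda_n + \partial_z \Lambda_n \cdot \frac{\partial_c(f_c^n)}{1 - \Lambda_n}.
\]
Then $X_n$ is the $c$-projection of the vanishing locus of this rational function on $\mathcal{C}_n$; clearing denominators and taking a resultant in $z$ produces $P_n$, whose degree is comparable to $2^n$ by a B\'ezout-type count.

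The heart of the argument is the $L^1_{\loc}$ convergence $u_n \to G_{\M}$. The main dynamical input is that for $c \notin \M$, the Julia set $J_c$ is a hyperbolic Cantor set on which the multipliers of period-$n$ cycles and the derivatives appearing in the resultant defining $P_n$ have growth rates controlled by $G_{\M}(c)$. Tracking these asymptotics gives pointwise convergence $u_n(c) \to G_{\M}(c)$ on $\bbC \setminus \M$. Combined with a locally uniform upper bound $u_n \leq G_{\M} + o(1)$ and the subharmonicity of $u_n$, Hartogs' lemma upgrades pointwise convergence outside $\M$ to $L^1_{\loc}$ convergence on all of $\bbC$, using that $G_{\M} \equiv 0$ on $\M$.

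The main obstacle is the locally uniform upper bound on $u_n$ over compact subsets of $\M$, where the multipliers are bounded and the factor $(1 - \Lambda_n)^{-1}$ blows up at parabolic parameters. I would address this by comparing $u_n$ with the better-understood potentials of $p_n(c, w) := \prod_i (\lambda_{n,i}(c) - w)$ for fixed $w \in \bbC$, whose $L^1_{\loc}$ convergence to $G_{\M}$ is due to Bassanelli--Berteloot, and by verifying that parabolic parameters (being countable) contribute no mass in the limit. Once the uniform upper bound is in hand, standard pluripotential theory delivers the equidistribution.
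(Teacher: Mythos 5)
Theorem~\ref{equidistr_theorem} is quoted from~\cite{Firsova_Gor_equi}; the present paper contains no proof of it, so your proposal can only be compared with the strategy of that cited work, which is indeed potential-theoretic in outline (a polynomial realizing the critical points as a zero divisor, $L^1_{\loc}$ convergence of the normalized potentials to the Green's function of $\M$, then $dd^c$). At that level your roadmap is the right one, but the two steps you treat as routine are exactly where the substance lies, and as written both have genuine gaps. First, your $P_n$ does not have zero divisor $X_n$: clearing the denominator $1-\Lambda_n$ and taking $\mathrm{Res}_z$ brings in the parabolic locus, the orbits of period strictly dividing $n$, orbit multiplicities, and extraneous resultant factors, and the ``B\'ezout-type count'' for that resultant gives a bound of order $4^n$, not $2^n$, whereas the theorem is normalized by $\#X_n$ itself. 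Since $dd^c u_n$ is then the normalized zero-counting measure of $P_n$ rather than $\frac{1}{\#X_n}\sum_{x\in X_n}\delta_x$, you must either construct a polynomial whose divisor is exactly $X_n$ (with the asymptotics of $\#X_n$ computed) or factor out the spurious part and control both its relative degree and its own equidistribution; none of this bookkeeping is addressed, and it is a real part of any proof of this statement.

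Second, the identification of the limit is asserted rather than argued, and it is the hard half. Pointwise convergence $u_n\to G_{\M}$ on $\bbC\setminus\M$ requires a \emph{lower} bound on $\frac{1}{\#X_n}\sum_{|\mathcal O|=n}\log|\rho'_{\mathcal O}(c)|$, i.e.\ on the product of the factors $|\nu_{\mathcal O}(c)|$, and this is delicate precisely because, by Theorems~\ref{main_theorem_1} and~\ref{main_theorem_2}, one has $0\in\mathcal Y_c$ on an open subset of $\bbC\setminus\M$, so critical points of multipliers accumulate on an open set outside $\M$ and arbitrarily small factors genuinely occur; your sketch offers no mechanism for this bound. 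Moreover, even granting that pointwise convergence together with the upper bound $u_n\le G_{\M}+o(1)$, the Hartogs-type compactness argument only yields that any subsequential $L^1_{\loc}$ limit $u$ equals $G_{\M}$ off $\M$ and satisfies $u\le 0$ on $\M$; it does not exclude a limit measure charging the interior of $\M$, since mass inside $\M$ can be redistributed without changing the potential outside, and critical points of multipliers really do occur in the interior (infinitely many multipliers have a critical point at $c=0$ by~\cite{Belova_Gorbovickis}). An additional argument specific to the interior of $\M$ is therefore required, and your only remarks about $\M$ misplace the difficulty: on compact subsets of the interior the upper bound follows from Cauchy estimates once the divisor is the correct one (the $(1-\Lambda_n)^{-1}$ blow-up is an artifact of your resultant construction), while ``parabolic parameters are countable, hence contribute no mass'' is not an argument---weak limits of probability measures can perfectly well charge a countable set.
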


At the same time, it was shown in~\cite{Belova_Gorbovickis} that $0$ is a critical point of infinitely many multipliers of different periodic orbits, hence, since $0\not\in\partial\M=\mathrm{supp}(\mbif)$, this implies that as the period $n$ grows to infinity, the critical points of period $n$ multipliers accumulate on some set $\mathcal X\subset\bbC$ that is strictly greater than the support of the bifurcation measure $\mbif$.

The purpose of the current paper is to study this accumulation set $\mathcal X$ which can formally be defined as
$$
\mathcal X:= \bigcap_{k=1}^\infty \left(\,\overline{\bigcup_{n=k}^\infty X_n} \,\right).
$$
We note that the study of the accumulation set $\mathcal X$ complements the statistical approach of Theorem~\ref{equidistr_theorem} in the attempt to understand asymptotic behavior of the critical points of the multipliers.

For the portion of the set $\mathcal X$ lying outside of the Mandelbrot set $\M$, the following theorem was proved by the current authors in~\cite{Firsova_Gor_equi}:

\begin{theorem}\cite{Firsova_Gor_equi}\label{XoutsideM_theorem}
If $c\in\bbC\setminus\M$ is a critical point of some multiplier, then $c\in\mathcal X$. Equivalently, the following identity holds:
$$
\overline{\bigcup_{n=1}^\infty (X_n\setminus\M)} =\mathcal X\setminus\M.
$$
\end{theorem}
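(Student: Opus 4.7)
The inclusion $\mathcal X\setminus\M\subseteq\overline{\bigcup_{n\geq 1}(X_n\setminus\M)}$ is immediate from the definition of $\mathcal X$ together with the openness of $\bbC\setminus\M$, and the reverse inclusion (and hence the displayed identity) reduces, using that each $X_n$ is finite, to the first assertion of the theorem. So the plan is to prove: given $c_0\in X_m\setminus\M$, one can produce parameters $c_k\to c_0$ and integers $N_k\to\infty$ with $c_k\in X_{N_k}$. Let $\mathcal O_0$ be the associated period-$m$ orbit of $f_{c_0}$ with multiplier $\rho_m(c)$ holomorphic on a neighborhood $U\Subset\bbC\setminus\M$ of $c_0$, satisfying $\rho_m'(c_0)=0$ and $|\rho_m(c_0)|>1$. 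Since $c_0\notin\M$, the Julia set $J_{c_0}$ is a Cantor set on which $f_{c_0}$ is conjugate to the one-sided 2-shift, and this conjugacy extends by holomorphic motion over $U$; consequently each primitive binary word $\mathbf w$ of length $p$ labels a period-$p$ orbit of $f_c$ with multiplier $\rho_{\mathbf w}(c)$ holomorphic on $U$. Let $\sigma$ be the word labeling $\mathcal O_0$; one may assume $m\geq 2$, since $X_1=\varnothing$.

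For each $k$ with $N_k:=km+1$ prime (infinitely many by Dirichlet's theorem), both $\sigma^k 0$ and $\sigma^k 1$ are automatically primitive of length $N_k$; fix either choice as $\mathbf s_k$ and set $\rho_k(c):=\rho_{\mathbf s_k}(c)$. The key step I plan to carry out is a uniform factorization on $U$,
\[
\rho_k(c)=\rho_m(c)^k\,h(c)\,\bigl(1+\varepsilon_k(c)\bigr),
\]
where $h$ is a nonvanishing holomorphic function on $U$ independent of $k$ (depending only on the fixed extra symbol and on $\sigma$) and $\|\varepsilon_k\|_{C^1(U)}=O(\Lambda^{-k})$ for some $\Lambda>1$. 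This rests on uniform exponential contraction of the inverse branches of $f_c^m$ near $\mathcal O_0$: the period-$N_k$ orbit labeled by $\mathbf s_k$ shadows $\mathcal O_0$ at a geometric rate for all but $O(1)$ iterations, so telescoping the product of local derivatives yields the asymptotic.

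Taking logarithmic derivatives and using $\rho_m'(c_0)=0$, the equation $\rho_k'(c)=0$ reduces near $c_0$ to
\[
\frac{\rho_m'(c)}{\rho_m(c)}+\frac{1}{k}\,\frac{h'(c)}{h(c)}+O(\Lambda^{-k})=0.
\]
The first term is a holomorphic function with an isolated zero at $c_0$, while the perturbation tends uniformly to $0$; Rouché's theorem then yields a solution $c_k\in U$ with $c_k\to c_0$. Then $\rho_k'(c_k)=0$, i.e., $c_k\in X_{N_k}\setminus\M$, and $N_k\to\infty$ gives $c_0\in\mathcal X$. The main obstacle is the uniform $C^1$ estimate for $\varepsilon_k$: one must control not only the position but also the multiplier of the shadowing orbit as $c$ varies over $U$, which reduces to standard distortion and exponential contraction bounds for inverse branches in the uniformly hyperbolic setting of Cantor Julia sets over $U\Subset\bbC\setminus\M$.
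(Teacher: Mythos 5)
Your proposal is essentially sound, but note first that this paper does not actually prove Theorem~\ref{XoutsideM_theorem}: it is quoted from~\cite{Firsova_Gor_equi}, and the present text only uses it (e.g.\ in the ``first case'' of the sufficient condition in the proof of Theorem~\ref{main_theorem_2}). What you construct is in effect a one-orbit version of the machinery this paper does develop: your factorization $\rho_k=\rho_m^k\,h\,(1+\varepsilon_k)$ is the same telescoping of inverse branches as Proposition~\ref{nest_prop} and formula~(\ref{g_c_OO_eq}) in the proof of the Averaging Lemma (your case corresponds to concatenating a long block along $\mathcal O_0$ with a bounded transition word, i.e.\ $\alpha=1$ and no second orbit), and your final step---take logarithmic derivatives and apply Rouch\'e to the nonconstant limit function---is exactly how the paper passes from the Averaging Lemma to Lemma~\ref{suff_cond_lemma} and part~(ii) of Theorem~\ref{main_theorem_2}. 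So the route is the same circle of ideas, implemented with explicit symbolic dynamics over the Cantor Julia sets; the prime-length trick $N_k=km+1$ is a clean way to guarantee exact periods.

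Two points need tightening, both minor. First, Rouch\'e requires $\rho_m'/\rho_m\not\equiv 0$ on $U$; you assert that the zero at $c_0$ is isolated, but you must rule out $\rho_m$ being locally (hence globally) constant. This is easy---for instance from the asymptotics $\rho_m(\tilde c)\sim\pm(-4\tilde c)^{m/2}$ as $\tilde c\to-\infty$ used in Proposition~\ref{not_id_zero_prop}---but it should be said. Second, the ``main obstacle'' you flag, the uniform $C^1$ bound on $\varepsilon_k$, is not really an obstacle: since $\varepsilon_k$ is holomorphic in $c$ on $U$, a uniform $C^0$ bound $O(\Lambda^{-k})$ gives the derivative bound on any $U'\Subset U$ by Cauchy estimates, so only the $C^0$ shadowing estimate has to be proved; that in turn follows from uniform hyperbolicity of $f_c$ on $J_c$ for $c\in\overline U\Subset\bbC\setminus\M$, where also $0\notin J_c$, so the factors $f_c'(z)=2z$ are uniformly bounded away from $0$ and $\infty$ on $J_c$ and the infinite product defining $h$ converges to a nonvanishing holomorphic function. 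Finally, in the reduction of the displayed identity to the first assertion, be aware that the closure on the left-hand side can pick up points of $\partial\M$, so the identity should be read with the closure taken relative to $\bbC\setminus\M$ (or with $\M$ removed afterwards); this bookkeeping issue is inherited from the statement as quoted and does not affect your proof of the main assertion.
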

It is important to mention that it does not follow from Theorem~\ref{XoutsideM_theorem} that there exist critical points of the multipliers outside of the Mandelbrot set $\M$ and that the set $\mathcal X\setminus\M$ is non-empty, although numerical computations from~\cite{Belova_Gorbovickis} suggest that this is the case.

The first result of this paper is the following:
\begin{thmx}\label{main_theorem_1}
The accumulation set $\mathcal X$ is bounded, path connected and contains the Mandelbrot set $\M$. Furthermore, the set $\mathcal X\setminus\M$ is nonempty and has a nonempty interior.
\end{thmx}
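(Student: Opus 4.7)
I would split Theorem~\ref{main_theorem_1} into its four assertions and tackle them in turn, leaning on Theorems~\ref{equidistr_theorem} and~\ref{XoutsideM_theorem}. For \textbf{boundedness}, I aim to find $R>0$ with $X_n\subset\{|c|\leq R\}$ for every $n$. For $|c|\gg 1$, $f_c$ is uniformly expanding on its Julia set $J_c$ and every period-$n$ orbit $\{z_0,\dots,z_{n-1}\}\subset J_c$ satisfies $|z_i|\sim\sqrt{|c|}$, with multiplier $\rho_n=2^n\prod_i z_i$. Differentiating the cycle equation $f_c^n(z_0)=z_0$ and using the expansion yields an asymptotic $d\rho_n/dc=(n/c)\rho_n(1+o(1))$ as $|c|\to\infty$, uniform in the choice of orbit, which forces $d\rho_n/dc\neq 0$ for $|c|$ large and gives boundedness of $\mathcal X$.

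For the \textbf{inclusion $\M\subset\mathcal X$}, closedness of $\mathcal X$ together with Theorem~\ref{equidistr_theorem} immediately yields $\partial\M=\mathrm{supp}(\mbif)\subset\mathcal X$. For an interior point $c_0\in\M$ lying in a hyperbolic component $H$ of period $p$, I would use satellite renormalization: at parabolic boundary points of $H$, satellite components of arbitrarily large period are born, and each such bifurcation carries a tuned copy of $\M$ into a neighborhood in parameter space. Straightening transfers critical points of multipliers from the ambient quadratic family (via Theorem~\ref{XoutsideM_theorem}) into these baby copies, and iterating along a chain of satellite bifurcations accumulating on $c_0$ produces a sequence $c_j\in X_{n_j}$ with $c_j\to c_0$ and $n_j\to\infty$. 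The \textbf{nonemptiness and open interior of $\mathcal X\setminus\M$} would then follow from the necessary-and-sufficient criterion for membership promised in the abstract --- a boundedness condition on a dynamically defined sequence attached to the critical orbit of $f_c$ --- which is manifestly open in $c$, so verifying it at one explicit parameter outside $\M$ (a candidate being among those numerically identified in~\cite{Belova_Gorbovickis}) produces an open subset of $\mathcal X\setminus\M$.

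For \textbf{path-connectedness}, since $\M$ is connected and $\M\subset\mathcal X$, it suffices to join every $c\in\mathcal X\setminus\M$ to $\M$ by a path in $\mathcal X$. Using Theorem~\ref{XoutsideM_theorem} to approximate $c$ by $c_k\in X_{n_k}\setminus\M$, I would construct, for each large $k$, a continuum $\Gamma_k\subset\mathcal X$ joining $c_k$ to $\partial\M$ by following, in the $(c,w)$-Riemann-surface picture for the period-$n_k$ multiplier polynomial, a one-parameter locus of critical projections that terminates when the underlying periodic orbit becomes parabolic on $\partial\M$. A Hausdorff-limit argument then yields a continuum in $\mathcal X$ joining $c$ to $\M$, and local connectedness built into the construction upgrades the continuum to a path via Hahn--Mazurkiewicz.

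The \textbf{main obstacle} is the inclusion $\M\subset\mathcal X$: equidistribution captures only $\partial\M$, and reaching arbitrary interior parameters requires a purely dynamical construction. Controlling how critical points of multipliers transform under straightening in the satellite-renormalization scheme --- in particular ensuring that the critical points persist in the straightened family with the correct combinatorial tagging --- is where the technical core of the argument lies.
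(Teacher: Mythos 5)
Your plan has genuine gaps at its core, the most serious being the inclusion $\M\subset\mathcal X$. The satellite-renormalization/straightening scheme cannot work as described: straightening is not analytic in the parameter, so critical points of multipliers are not transported into tuned copies of $\M$; baby Mandelbrot copies do not accumulate on an interior point $c_0$ of a hyperbolic component (a neighborhood of $c_0$ lies inside the component and meets no small copy), so no sequence $c_j\to c_0$ is produced; and Theorem~\ref{XoutsideM_theorem} supplies no critical points outside $\M$ to transfer in the first place --- their very existence is part of what Theorem~\ref{main_theorem_1} asserts and, as the paper stresses, had no rigorous proof before. The paper's actual mechanism is entirely different and rests on an Averaging Lemma: for any two repelling orbits $\mathcal O_1,\mathcal O_2$ (away from $(c,z)=(-2,2)$) one can realize any convex combination $\alpha\nu_{\mathcal O_1}+(1-\alpha)\nu_{\mathcal O_2}$ as a uniform limit of maps $\nu_{\mathcal O_j}$ for genuine repelling orbits, built by shadowing long blocks of the two orbits. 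This yields convexity of $\mathcal Y_c$ and the sufficient condition ``$0\in\CHull\{\nu_{\mathcal O_i}(c)\}$ for finitely many orbits $\Rightarrow c\in\mathcal X$''. The inclusion $\M\subset\mathcal X$ is then proved by contradiction: if all $\nu_{\mathcal O}(c_0)$ lay in a closed half-plane $H$ with $0\in\partial H$, a Vieta-type identity expressing $F_k'(c_0)/(kF_k(c_0))$, $F_k(c)=f_c^{\circ(k-1)}(c)$, as a weighted sum of the $\nu_{\mathcal O}(c_0)$, together with boundedness of the critical orbit for $c_0\in\M$, forces every high-period $\nu_{\mathcal O}(c_0)$ arbitrarily close to $\partial H$, while the Averaging Lemma (refined to prescribe the exact period) produces high-period orbits staying a definite distance inside $H$. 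None of this machinery appears in your outline, and no substitute for it does.

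The other parts have gaps too, though less structural. For $\mathcal X\setminus\M$ having nonempty interior, the criterion $0\in\mathcal Y_c$ is \emph{not} manifestly open in $c$ (if $0$ sits on $\partial\mathcal Y_c$ nothing is gained), and ``verify it at a numerically identified candidate'' is not a proof; the paper instead checks the genuinely open condition that $0$ lies in the \emph{interior} of the convex hull of the explicitly computable values $\nu_{\mathcal O}(c)$ for periods $1,2,3$ at the parabolic parameter $c=-3/4\in\partial\M$, whence a full neighborhood of $-3/4$, hence an open set outside $\M$, lies in $\mathcal X$. For path-connectedness, the ``one-parameter locus of critical projections'' is undefined --- for fixed period the critical points form a finite set, not a curve --- and Hahn--Mazurkiewicz needs local connectedness you have not established; the paper instead homotopes $\nu_{\mathcal O}$ (with a zero at $c_0\notin\M$) to the never-vanishing period-two map $\nu_{\mathcal O_2}(c)=1/(2c+2)$ through convex combinations, notes that zeros of these combinations lie in $\mathcal X$ by convexity of $\mathcal Y_c$, and tracks the zero set until it must hit $\partial\M$, using non-constancy of the combinations (an asymptotic argument along $(-\infty,-3)$) to keep the zero locus moving. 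Your boundedness sketch is the one part close in spirit to the paper, but the uniformity in the orbit and in $n$ that you assert is exactly the content of the paper's normal-family argument for the maps $\gamma_{\mathcal O}$ on the double cover of $\bbC\setminus\M$ (a limit map would need a double pole at $\infty$ while the multiplier asymptotics force a simple pole), so it needs proof rather than assertion.
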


\begin{figure}\label{X_picture}
\begin{center}
\includegraphics[width=0.8\textwidth]{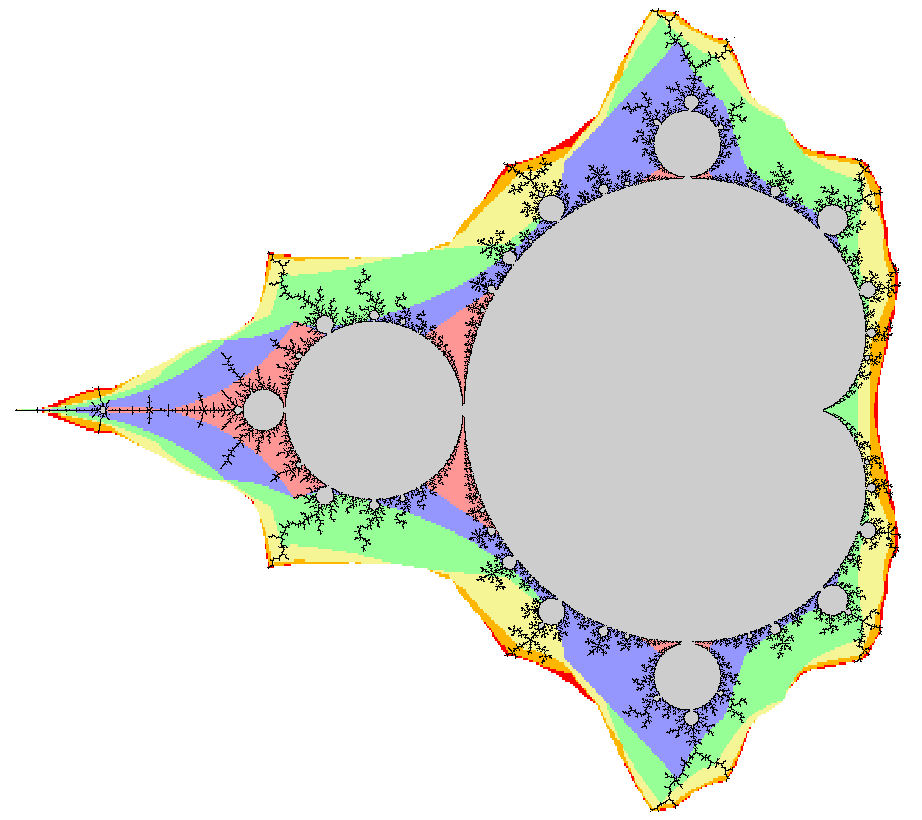}
\caption{The set $\mathcal X$ is numerically approximated by the union of the Mandelbrot set and the colored regions. The algorithm for the construction of this picture, as well as the meaning of the colors are explained in Appendix~\ref{Pic_section}.}
\end{center}
\end{figure}

Figure~\ref{X_picture} provides a numerical approximation of the accumulation set~$\mathcal X$.

We need a few more definitions in order to state our next result. For a periodic orbit $\mathcal O$ of some map $f_c$, let $|\mathcal O|$ stand for its period (i.e., the number of distinct points in it).

We recall that a periodic orbit is called \textit{primitive parabolic} if its multiplier is equal to $1$. As discussed in~\cite{Firsova_Gor_equi}, for every $c_0\in\bbC$ and every periodic orbit $\mathcal O$ of $f_{c_0}$ that is not primitive parabolic, the multiplier of this periodic orbit can be viewed as a locally analytic function of the parameter $c$ in the neighborhood of $c_0$. We denote this function by $\rho_{\mathcal O}$. 
If in addition to that, $\rho_{\mathcal O}(c_0)\neq 0$, one can consider a locally analytic function $\nu_{\mathcal O}$, defined in a neighborhood of $c_0$ by the formula
\begin{equation}\label{def_nu_eq}
\nu_{\mathcal O}(c):= \frac{\rho'_{\mathcal O}(c)}{|\mathcal O|\,\rho_{\mathcal O}(c)}.
\end{equation}

For each $c\in\bbC$, 
let $\Omega_c$ denote the set of all repelling periodic orbits of the map $f_c$. In particular, the locally analytic maps $\nu_{\mathcal O}$ are defined for all $\mathcal O\in\Omega_c$ in corresponding neighborhoods of the parameter $c$.

For each $c\in\bbC$, we consider the set $\mathcal Y_c\subset\bbC$, defined by
$$
\mathcal Y_c:= \overline{\left\{\nu_{\mathcal O}(c) \mid \mathcal O\in\Omega_c\right\}}.
$$

Our second result is the following:

\begin{thmx}\label{main_theorem_2}
The following two properties hold:
\begin{enumerate}[(i)]
\item\label{Y_convex_property} For every parameter $c\in\bbC\setminus\{-2\}$, the set $\mathcal Y_c$ is convex; 
for $c=-2$, the set $\mathcal Y_{-2}$ is the union of a convex set and the point~$-\frac{1}{6}$. 
\item\label{YX_dichotomy} For every parameter $c\in\bbC\setminus\M$, the set $\mathcal Y_c$ is bounded. A parameter $c\in\bbC\setminus\M$ belongs to $\mathcal X$, if and only if $0\in\mathcal Y_c$.
\end{enumerate}
\end{thmx}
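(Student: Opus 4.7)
My plan centers on an integral representation of $\nu_{\mathcal{O}}(c)$ as a Birkhoff average. For $c \in \bbC \setminus \M$, the Cantor Julia set $J_c$ moves holomorphically with $c$, giving a topological conjugacy $\pi_c: \Sigma \to J_c$ with $\Sigma = \{0,1\}^{\bbN}$ (the full shift). Differentiating $\log \rho_{\mathcal O}(c) = |\mathcal O|\log 2 + \sum_i \log \pi_c(\sigma_i)$ along a periodic sequence $\sigma$ associated to $\mathcal O$ yields
\[
  \nu_{\mathcal{O}}(c) \;=\; \int_\Sigma \phi_c \, d\mu_\sigma, \qquad \phi_c(\sigma) \,:=\, \frac{\partial_c \pi_c(\sigma)}{\pi_c(\sigma)},
\]
with $\mu_\sigma$ the normalized counting measure on the shift orbit of $\sigma$. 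For $c \in \M$ an analogous formula holds via the semi-conjugacy of the shift with $f_c|_{J_c}$, with the caveat that $\phi_c$ becomes unbounded when $0 \in J_c$.

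For part~(i), I would establish convexity by a concatenation construction. Given two repelling periodic orbits $\mathcal O_1, \mathcal O_2$ with periodic codes $\sigma_1, \sigma_2$ of shift-periods $n_1, n_2$, the sequence $(\sigma_1^{a} \sigma_2^{b})^{\infty}$ is periodic of period $an_1 + bn_2$; its orbital measure in $\Sigma$ converges weakly, as $a, b \to \infty$, to a convex combination of $\mu_{\sigma_1^\infty}$ and $\mu_{\sigma_2^\infty}$ with ratio $an_1 : bn_2$, and continuity of $\phi_c$ then forces the associated $\nu$-values to approach the corresponding convex combination of $\nu_{\mathcal O_1}(c)$ and $\nu_{\mathcal O_2}(c)$. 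Closing under limits and varying the ratios yields convexity of $\mathcal Y_c$ for $c \neq -2$. The anomaly at $c = -2$ arises because $J_{-2} = [-2, 2]$ has the fixed point $z=2$ at an endpoint; the expanding dynamics there (multiplier $4$) forces every other period-$n$ orbit of $f_{-2}$ to contain a point exponentially close to the critical point $0$, producing a term of order $4^n$ in the sum $\sum_i \partial_c f^n(z_i)/z_i$. Making this quantitative should show that the $\nu$-value of every orbit other than $\{z=2\}$ stays a definite distance from $-\tfrac{1}{6}=\nu_{\{z=2\}}(-2)$, whence $\mathcal Y_{-2} = (\text{convex set}) \cup \{-\tfrac{1}{6}\}$ with $-\tfrac{1}{6}$ isolated.

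For part~(ii), boundedness and the easy direction go quickly. When $c \notin \M$ the map $\pi_c$ avoids $0$, so $\phi_c$ is continuous and uniformly bounded on $\Sigma$, and hence $\mathcal Y_c$ is bounded. If $c \in \mathcal X \setminus \M$, pick $c_n \to c$ and orbits $\mathcal O_n$ of $f_{c_n}$ of growing period with $\nu_{\mathcal O_n}(c_n) = 0$; by uniform continuity of $(c', \sigma) \mapsto \phi_{c'}(\sigma)$ on a compact neighborhood of $\{c\}\times\Sigma$ inside $(\bbC \setminus \M) \times \Sigma$, the holomorphic continuations $\widetilde{\mathcal O}_n$ of these orbits to $f_c$ satisfy $\nu_{\widetilde{\mathcal O}_n}(c) = \nu_{\mathcal O_n}(c_n) + o(1) \to 0$, giving $0 \in \mathcal Y_c$.

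For the hard direction, assume $0 \in \mathcal Y_c$ with $c \notin \M$ and pick orbits $\mathcal O_n$ of $f_c$ of growing period with $\nu_{\mathcal O_n}(c) \to 0$. The holomorphic functions $c' \mapsto \nu_{\mathcal O_n}(c')$ on a fixed neighborhood $U \subset \bbC \setminus \M$ of $c$ are uniformly bounded, so Montel's theorem produces a subsequential normal limit $\nu_\infty$ satisfying $\nu_\infty(c) = 0$. If $\nu_\infty \not\equiv 0$, Hurwitz's theorem supplies zeros of $\nu_{\mathcal O_n}$ in $U$ accumulating at $c$; each is a critical point of a multiplier of period $|\mathcal O_n|\to\infty$, so $c \in \mathcal X$. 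The main anticipated obstacle is the degenerate case $\nu_\infty \equiv 0$. My plan to rule it out is to exploit the convexity from part~(i): concatenate each $\mathcal O_n$ with a fixed \emph{witness} orbit $\mathcal O^*$ whose $\nu$-function is nonconstant on $U$, producing a new sequence of orbits of growing period whose $\nu$-values still tend to $0$ at $c$ but whose subsequential normal limit must be a nontrivial convex combination of $\nu_\infty$ and $\nu_{\mathcal O^*}$, hence nonconstant, restoring the Hurwitz step. Formalizing this concatenation under uniform control on $U$ is where the technical heavy lifting will lie.
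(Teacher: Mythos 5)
Your symbolic framework handles only part of what the theorem asserts. For part~(i), convexity is claimed for \emph{every} $c\neq-2$, including $c$ in the interior of $\M$ and on $\partial\M$, but your construction lives entirely in the model $\pi_c\colon\Sigma\to J_c$ attached to the holomorphic motion of the Cantor Julia set over $\bbC\setminus\M$. For $c\in\M$ there is no such motion (and for non--locally-connected Julia sets not even a semi-conjugacy of $f_c|_{J_c}$ with the full shift), so $\phi_c=\partial_c\pi_c/\pi_c$ is not defined and the weak-convergence-of-orbital-measures argument has nothing to act on; the one-line caveat about $0\in J_c$ does not repair this. The paper instead proves the Averaging Lemma (Lemma~\ref{averaging_lemma}) by a purely local shadowing construction with inverse branches: follow $\mathcal O_1$ for $n_1N_1$ steps, transition to $\mathcal O_2$ along a finite backward orbit avoiding the critical point (Proposition~\ref{dense_preimage_prop}; its unique failure is exactly $c=-2$ with the orbit $\{2\}$), follow $\mathcal O_2$ for $n_2N_2$ steps and close up; then the root-multiplier functions $g_{\mathcal O_{N_1,N_2}}$ converge uniformly on a $c$-neighborhood to $g_{\mathcal O_1}^{\alpha}g_{\mathcal O_2}^{1-\alpha}$, and logarithmic differentiation gives the convex combination of the $\nu$'s at every parameter, not just outside $\M$. (Your concatenation is essentially this argument rewritten symbolically, and it is fine for $c\notin\M$.) Also, your quantitative claim at $c=-2$ --- that every other high-period orbit passes exponentially close to $0$, so that $-\tfrac16$ is isolated --- is both unnecessary (the statement does not assert isolation) and false: there are orbits of every large period staying a definite distance from $0$, e.g.\ those whose doubling-map angles avoid neighborhoods of $1/4$ and $3/4$.

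For part~(ii), the boundedness and the necessity direction are fine and essentially match the paper, but the sufficiency direction has a genuine hole at precisely the point you flag: excluding the degenerate normal limit $\nu_\infty\equiv0$. Your witness-concatenation fix is internally inconsistent. If the concatenated orbits carry a fixed weight $t>0$ on the witness $\mathcal O^*$, their $\nu$-values at $c$ tend to $t\,\nu_{\mathcal O^*}(c)+(1-t)\cdot 0$, which is nonzero unless $\nu_{\mathcal O^*}(c)=0$ (and in that case you are already done by the first branch); so you cannot have both ``values still tend to $0$ at $c$'' and ``nontrivial convex combination''. If instead the witness weight tends to $0$, the normal limit is again the zero function. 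Moreover, Hurwitz applied to the nonconstant limit $t\,\nu_{\mathcal O^*}+(1-t)\nu_\infty=t\,\nu_{\mathcal O^*}$ only yields zeros near the zeros of $\nu_{\mathcal O^*}$, which need not be anywhere near $c$, so it would not place $c$ in $\mathcal X$. The paper closes this gap by a different mechanism (Lemma~\ref{nu_normality_lemma}): for $c\notin\M$ the family extends to an unbounded simply connected $U\subset\bbC\setminus\M$, and near infinity $2\sqrt{|\tilde c|}-2<|g_{\mathcal O}(\tilde c)|<2\sqrt{|\tilde c|}+2$ uniformly in $\mathcal O$, so no normal limit of the $g_{\mathcal O}$ is constant and hence $0$ is never a normal limit of $\nu_{\mathcal O}=g_{\mathcal O}'/g_{\mathcal O}$ (a related asymptotic argument along $(-\infty,-3)$ appears in Proposition~\ref{not_id_zero_prop}). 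You need an input of this kind --- some global growth or asymptotic information about the multipliers --- to make your Montel--Hurwitz scheme complete; without it the proof of the sufficiency direction does not go through.
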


We note that the relation between the sets $\mathcal Y_c$ and $\mathcal X$, described in part~\ref{YX_dichotomy} of Theorem~\ref{main_theorem_2}, resembles the relation between the filled Julia and the Mandelbrot sets, namely that $c\in\M$, if and only if $0$ belongs to the filled Julia set $K_c$ of $f_c$.

As an application of our results and the results of~\cite{He_Nie_2020}, we deduce that the Hausdorff dimension function cannot have critical points outside of the accumulation set $\mathcal X$. More specifically, let $\delta\colon\bbC\to\bbR$ be the function that assigns to each parameter $c\in\bbC$ the Hausdorff dimension of the Julia set of $f_c$. It is known that the function $\delta$ is real-analytic in each hyperbolic component~\cite{Bowen_79} (including the complement of the Mandelbrot set).

\begin{corollary}
The Hausdorff dimension function $\delta$ has no critical points in $\bbC\setminus\mathcal X$.
\end{corollary}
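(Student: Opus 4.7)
The plan is to express the real gradient $\nabla\delta(c)$ outside the Mandelbrot set as a limit of convex combinations of the complex numbers $\nu_{\mathcal O}(c)$, and then use Theorem~B to rule out zero from such combinations.

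First I would reduce to the case $c\in\bbC\setminus\M$. By Theorem~A one has $\M\subset\mathcal X$, so $\bbC\setminus\mathcal X\subset\bbC\setminus\M$; in particular any $c\in\bbC\setminus\mathcal X$ satisfies $c\neq -2$, and hence by Theorem~B(i) the set $\mathcal Y_c$ is closed, bounded, and convex, while Theorem~B(ii) gives $0\notin\mathcal Y_c$. A standard separation in $\bbR^2\cong\bbC$ then produces $\zeta\in\bbC\setminus\{0\}$ and $\eps>0$ with $\Re(\zeta w)\geq\eps$ for every $w\in\mathcal Y_c$.

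Second, I would invoke the thermodynamic formalism for the hyperbolic Julia set $J_c$. Bowen's theorem characterizes $\delta(c)$ as the unique real $s$ solving $P_c(-s\log|f_c'|)=0$, and Ruelle's pressure-derivative formula together with the equidistribution of repelling periodic orbits with respect to the conformal measure of dimension $\delta(c)$ --- this is the input taken from~\cite{He_Nie_2020} --- yields a periodic-orbit expression of the form
$$\nabla\delta(c)\;=\;-\frac{1}{\chi(c)}\lim_{n\to\infty}\frac{\sum_{|\mathcal O|\leq n}|\rho_{\mathcal O}(c)|^{-\delta(c)}\,\nabla L_{\mathcal O}(c)}{\sum_{|\mathcal O|\leq n}|\rho_{\mathcal O}(c)|^{-\delta(c)}},$$
where $L_{\mathcal O}(c):=\tfrac{1}{|\mathcal O|}\log|\rho_{\mathcal O}(c)|$ and $\chi(c)>0$ is the Lyapunov exponent of the equilibrium state. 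Because $\rho_{\mathcal O}$ is holomorphic in $c$, a direct computation gives $\partial_c L_{\mathcal O}=\tfrac{1}{2}\nu_{\mathcal O}$; under the standard identification $\bbR^2\cong\bbC$, $(x,y)\leftrightarrow x+iy$, the real gradient $\nabla L_{\mathcal O}(c)$ therefore corresponds to $\overline{\nu_{\mathcal O}(c)}$. Consequently $-\chi(c)\,\overline{\nabla\delta(c)}\in\bbC$ is a limit of convex combinations of $\{\nu_{\mathcal O}(c):\mathcal O\in\Omega_c\}$, which by convexity and closedness of $\mathcal Y_c$ lies in $\mathcal Y_c$.

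To finish, setting $w:=-\chi(c)\,\overline{\nabla\delta(c)}\in\mathcal Y_c$, the separation from the first step gives $\Re(\zeta w)\geq\eps>0$, so $w\neq 0$ and therefore $\nabla\delta(c)\neq 0$, which means $c$ is not a critical point of $\delta$. The main obstacle I expect is extracting the precise periodic-orbit derivative formula --- specifically, confirming that $\nabla\delta(c)$ is a convex limit of the vectors $\nabla L_{\mathcal O}(c)$ with the \emph{positive} weights $|\rho_{\mathcal O}(c)|^{-\delta(c)}$, so that the result really is an element of $\mathcal Y_c$. For the family $f_c(z)=z^2+c$ the potential $\log|f_c'|=\log|2z|$ has no explicit $c$-dependence, so Ruelle's formula reduces to tracking only the motion-of-orbit contributions, which a chain-rule calculation identifies with $\nabla L_{\mathcal O}$; this is precisely the content supplied by~\cite{He_Nie_2020}.
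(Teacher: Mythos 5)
Your argument is correct and follows essentially the same route as the paper: reduce to $\bbC\setminus\M$ via Theorem~A (so $\delta$ is real-analytic there), note that $0\notin\mathcal Y_c$ by Theorem~B, and use the He--Nie input to place (a positive multiple of the conjugate of) $\nabla\delta(c)$ inside the closed convex set $\mathcal Y_c$. The only difference is cosmetic: you partially unpack the thermodynamic-formalism content of the cited He--Nie theorem (modulo a harmless missing factor of $\delta(c)$ in the gradient formula and the superfluous separation step), whereas the paper invokes \cite[Theorem~1.3]{He_Nie_2020} as a black box.
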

\begin{proof}
According to Theorem~\ref{main_theorem_1}, the set $\bbC\setminus\mathcal X$ is contained in the complement of the Mandelbrot set $\M$, hence, $\delta$ is real-analytic on $\bbC\setminus\mathcal X$. Then Theorem~\ref{main_theorem_2} together with~\cite[Theorem~1.3]{He_Nie_2020} implies that $\delta$ has no critical points in $\bbC\setminus\mathcal X$.
\end{proof}

\subsection*{Open questions}
Finally, we list some further questions that can be addressed in the study of the geometry of the accumulation set $\mathcal X$ and the sets $\mathcal Y_c$.

\begin{enumerate}
\item Is the set $\mathcal X$ simply connected?
\item Does the boundary of the set $\mathcal X$ possess any kind of self-similarity? Is the Hausdorff dimension of $\partial\mathcal X$ equal to~$1$ or is it strictly greater than~$1$?
\item For which $c\in\bbC$ are the sets $\mathcal Y_c$ polygonal? How are the points of the finite sets $Y_{c,n}=\{\nu_{\mathcal O}(c)\mid \mathcal O\in\Omega_c, |\mathcal O|=n\}$ distributed inside $\mathcal Y_c$ as $n\to\infty$?
\item What can we say about the geometry of the sets $\mathcal Y_c$, when $c\in\partial\M$? Are these sets always unbounded? 
\end{enumerate}

\subsection{Acknowledgements} Research of T. Firsova was supported in part by NSF grant DMS–1505342 , and by Laboratory of Dynamical Systems and Applications NRU HSE, grant of the Ministry of science and higher education of the RF ag. N 075-15-2019-1931.

\section{On averaging several periodic orbits}

In this section we state and prove the so called Averaging Lemma which is the key component of the proofs of Theorem~\ref{main_theorem_1} and Theorem~\ref{main_theorem_2}.

\begin{lemma}[\textbf{Averaging Lemma}]\label{averaging_lemma}
	For any real $\alpha\in[0,1]$, a complex parameter $c_0\in\bbC$ and any two distinct repelling periodic orbits $\mathcal O_1$ and $\mathcal O_2$ of $f_{c_0}$, such that if $c_0=-2$, then neither of the orbits $\mathcal O_1, \mathcal O_2$ is the fixed point $z=2$, the following holds: there exist a neighborhood $U$ of $c_0$ and a sequence of distinct repelling periodic orbits $\{\mathcal O_j\}_{j=3}^\infty$ of $f_{c_0}$, such that the maps $\nu_{\mathcal O_j}$ are defined and analytic in $U$, for all $j\in\bbN$, and the sequence of maps $\{\nu_{\mathcal O_j}\}_{j=3}^\infty$ converges to $\alpha\nu_{\mathcal O_1}+(1-\alpha)\nu_{\mathcal O_2}$ uniformly in $U$.
\end{lemma}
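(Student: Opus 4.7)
The plan is, for each pair of positive integers $(k_1, k_2)$, to construct a periodic orbit $\mathcal{O}_{k_1, k_2}$ of $f_{c_0}$ that shadows $\mathcal{O}_1$ for $k_1$ laps, makes a short transition of fixed length $n_{12}$ to a point near $\mathcal{O}_2$, shadows $\mathcal{O}_2$ for $k_2$ laps, then transitions back to $\mathcal{O}_1$ along a path of fixed length $n_{21}$. Along a suitable sequence $(k_1^{(j)}, k_2^{(j)}) \to (\infty,\infty)$ with $k_1^{(j)} p_1 / N^{(j)} \to \alpha$, where $p_i := |\mathcal{O}_i|$ and $N^{(j)} = k_1^{(j)} p_1 + k_2^{(j)} p_2 + n_{12} + n_{21}$ is the period of $\mathcal{O}_{k_1^{(j)}, k_2^{(j)}}$, the corresponding maps $\nu_{\mathcal{O}_j}$ will converge uniformly to the claimed convex combination.

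To set up the construction, fix base points $z_i \in \mathcal{O}_i$. The orbits $\mathcal{O}_i$ lie in the Julia set of $f_{c_0}$, and the hypothesis (excluding the fixed point $z = 2$ when $c_0 = -2$) guarantees that neither $z_i$ is an exceptional point, so each has a dense backward grand orbit in the Julia set. Given small $\varepsilon > 0$, pick $n_{12}, n_{21}$ and points $w_{12}, w_{21}$ with $f_{c_0}^{n_{12}}(w_{12}) = z_2$, $f_{c_0}^{n_{21}}(w_{21}) = z_1$, $|w_{12} - z_1| < \varepsilon$, $|w_{21} - z_2| < \varepsilon$, and chosen so that the corresponding inverse branches $T_{12}$ of $f^{n_{12}}$ (sending $z_2 \mapsto w_{12}$) and $T_{21}$ of $f^{n_{21}}$ (sending $z_1 \mapsto w_{21}$) avoid the critical point and therefore extend holomorphically in $c$ to some neighborhood $U_0$ of $c_0$. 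Denoting by $g_i$ the local inverse of $f_c^{p_i}$ fixing the analytic continuation of $z_i$, the composition
$$
G_{k_1, k_2}(z; c) := g_1^{k_1} \circ T_{12} \circ g_2^{k_2} \circ T_{21}(z),
$$
which is an inverse branch of $f_c^{N}$ with $N = k_1 p_1 + k_2 p_2 + n_{12} + n_{21}$, is a uniform contraction on a small disk around $z_1$ for all $c \in U_0$ once $k_1, k_2$ are large. Its unique fixed point $z^*(c; k_1, k_2)$ depends holomorphically on $c$ and seeds a repelling periodic orbit $\mathcal{O}_{k_1, k_2}$ of $f_c$ whose period divides $N$; for generic $(k_1, k_2)$ the period equals $N$, yielding infinitely many distinct orbits.

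To compute $\nu_{\mathcal{O}_{k_1, k_2}}$, use the identity $|\mathcal{O}|\, \nu_{\mathcal{O}} = (\log \rho_{\mathcal{O}})' = \bigl(\sum_{z \in \mathcal{O}} \log f_c'(z)\bigr)'$. Partitioning $\mathcal{O}_{k_1, k_2}$ into $k_1$ near-loops around $\mathcal{O}_1$, $k_2$ near-loops around $\mathcal{O}_2$, and the two transition arcs of fixed lengths $n_{12}, n_{21}$, and using the exponential rate of shadowing furnished by the local expansion near the repelling orbits, one obtains
$$
\sum_{z \in \mathcal{O}_{k_1, k_2}} \log f_c'(z) = k_1 \sum_{z \in \mathcal{O}_1} \log f_c'(z) + k_2 \sum_{z \in \mathcal{O}_2} \log f_c'(z) + R(c; k_1, k_2),
$$
where $R$ is holomorphic on $U_0$ and bounded uniformly in $c \in U_0$ and $(k_1, k_2)$ (the transition contributions being fixed and the shadowing errors decaying geometrically). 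Dividing by $N$ and differentiating in $c$ yields
$$
\nu_{\mathcal{O}_{k_1, k_2}}(c) = \frac{k_1 p_1}{N}\, \nu_{\mathcal{O}_1}(c) + \frac{k_2 p_2}{N}\, \nu_{\mathcal{O}_2}(c) + \frac{1}{N}\, \partial_c R(c; k_1, k_2),
$$
and the last term tends to $0$ uniformly on any compact $U \Subset U_0$ by Cauchy estimates on $R$. Choosing $(k_1^{(j)}, k_2^{(j)})$ along distinct generic pairs so that $k_1^{(j)} p_1 / N^{(j)} \to \alpha$ completes the proof.

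The main obstacle I anticipate is the uniform control of $R(c; k_1, k_2)$ simultaneously in $c$ and in the combinatorial parameters. This amounts to a quantitative shadowing statement whose constants do not degenerate as $c$ varies over $U_0$: one needs the local expansion of $f_c^{p_i}$ near $\mathcal{O}_i$ to be uniform on $U_0$, the fixed transitional inverse branches $T_{12}, T_{21}$ to remain holomorphic with uniformly bounded derivatives, and the shadowing error to decay geometrically at a rate independent of $c$. A secondary technical point is choosing the combinatorial sequence $(k_1^{(j)}, k_2^{(j)})$ so that the orbits $\mathcal{O}_j$ are pairwise distinct, which is automatic as soon as the periods $N^{(j)}$ are strictly increasing.
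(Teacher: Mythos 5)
Your proposal is correct and is essentially the paper's own argument: you concatenate the contracting inverse branches at the two repelling orbits with two fixed transition inverse branches that avoid the critical point, take the fixed point of the resulting contraction to produce the new orbits of period $N=k_1p_1+k_2p_2+n_{12}+n_{21}$, and pass to the limit with weights $k_1p_1/N\to\alpha$; the only cosmetic difference is that the paper does the multiplier bookkeeping multiplicatively via $N$-th roots (Propositions~\ref{log_der_prop} and~\ref{nest_prop}) while you do it additively via logarithms, a uniformly bounded remainder $R$, and a Cauchy estimate, which is the same computation. The one step you assert rather than prove---that the transition branches can be chosen to avoid the critical point, uniformly in $c$ near $c_0$---is exactly the paper's Proposition~\ref{dense_preimage_prop}, and it is precisely where the exclusion of $(c_0,z)=(-2,2)$ enters: the reason is not exceptionality (the backward orbit of $2$ under $z^2-2$ is dense in the Julia set), but that the only preimages of $2$ are $2$ itself and the critical value, so no critical-point-avoiding backward path can leave that fixed point.
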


We need a few preliminary propositions before we can pass to the proof of Lemma~\ref{averaging_lemma}.

For any $c_0\in\bbC$ and a periodic orbit $\mathcal O$ of $f_{c_0}$ that is non-critical and not primitively parabolic, 
let $U_{\mathcal O}\subset\bbC$ be a simply connected neighborhood of $c_0$, such that $\rho_{\mathcal O}(c)\neq 0$ for any $c\in U_{\mathcal O}$ and let $g_{\mathcal O}\colon U_{\mathcal O}\to\bbC$ be the analytic map defined by the relation
\begin{equation}\label{g_cO_def_eq}
g_{\mathcal O}(c):= (\rho_{\mathcal O}(c))^{1/|\mathcal O|},
\end{equation}
where the branch of the root is chosen so that 
$$
\arg(g_{\mathcal O}(c))\in (-\pi/|\mathcal O|, \pi/|\mathcal O|].
$$
(A particular choice of the branch of the root is not important, but we prefer to make a definite choice.)

For further reference, let us make the following basic observation:
\begin{proposition}\label{log_der_prop}
For any $c_0\in\bbC$, a non-critical periodic orbit $\mathcal O$ of $f_{c_0}$ and a neighborhood $U_{\mathcal O}\subset\bbC$, satisfying the above conditions, we have
$$
\frac{d}{dc}[\log (g_{\mathcal O}(c))] = \nu_{\mathcal O}(c),
$$
for all $c\in U_{\mathcal O}$.
\end{proposition}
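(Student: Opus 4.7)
The proposition is essentially a logarithmic derivative computation, so my plan is to reduce it to the chain rule applied to the defining relation $g_{\mathcal O}^{|\mathcal O|}=\rho_{\mathcal O}$. The first thing I would verify is that the left-hand side makes sense as a meromorphic object: since $\rho_{\mathcal O}$ is nonvanishing on $U_{\mathcal O}$ by construction, $g_{\mathcal O}$ is a nonvanishing analytic function, so $g'_{\mathcal O}/g_{\mathcal O}$ is a well-defined analytic function on $U_{\mathcal O}$. Although $\log g_{\mathcal O}$ itself need not be single-valued on all of $U_{\mathcal O}$, the expression $\tfrac{d}{dc}\log g_{\mathcal O}(c)$ is unambiguous as $g'_{\mathcal O}/g_{\mathcal O}$, and this is clearly independent of the choice of branch of the root in (\ref{g_cO_def_eq}); simple-connectedness of $U_{\mathcal O}$ is in fact enough to define a global branch of $\log g_{\mathcal O}$ if one wants a primitive.

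Next, I would differentiate the identity $g_{\mathcal O}(c)^{|\mathcal O|}=\rho_{\mathcal O}(c)$, valid on $U_{\mathcal O}$, to obtain
$$
|\mathcal O|\,g_{\mathcal O}(c)^{|\mathcal O|-1}\,g'_{\mathcal O}(c)=\rho'_{\mathcal O}(c).
$$
Dividing both sides by $|\mathcal O|\,g_{\mathcal O}(c)^{|\mathcal O|}=|\mathcal O|\,\rho_{\mathcal O}(c)$ (which is nonzero on $U_{\mathcal O}$) gives
$$
\frac{g'_{\mathcal O}(c)}{g_{\mathcal O}(c)}=\frac{\rho'_{\mathcal O}(c)}{|\mathcal O|\,\rho_{\mathcal O}(c)}=\nu_{\mathcal O}(c),
$$
by definition (\ref{def_nu_eq}) of $\nu_{\mathcal O}$. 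This is exactly the asserted identity.

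There is no real obstacle to this argument; the only subtle point worth flagging is the branch choice for the root, but as noted the logarithmic derivative washes this out. Consequently the proof reduces to the two-line computation above, and I expect the author's proof to be essentially identical.
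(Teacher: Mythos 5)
Your computation is correct and is exactly the ``basic computation'' the paper invokes: differentiating $g_{\mathcal O}^{|\mathcal O|}=\rho_{\mathcal O}$ and dividing by $|\mathcal O|\,\rho_{\mathcal O}$ yields $g'_{\mathcal O}/g_{\mathcal O}=\nu_{\mathcal O}$, with the branch choice correctly noted as irrelevant to the logarithmic derivative. No discrepancy with the paper's (unspelled-out) proof.
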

\begin{proof}
This follows from a basic computation.
\end{proof}

\begin{proposition}\label{nest_prop}
Assume, $z_0\in\bbC$ is a periodic point that belongs to a repelling periodic orbit $\mathcal O$ of period $n$ for a map $f_{c_0}$, where $c_0\in\bbC$ is an arbitrary fixed parameter. 
Let $V\subset\bbC$ be a simply connected neighborhood of $z_0$, such that $f_{c_0}^{\circ n}$ is univalent on $V$ and for an appropriate branch of the inverse $f_{c_0}^{\circ (- n)}$, the inclusion $f_{c_0}^{\circ (- n)}(V)\Subset V$ holds. 
Then there exists a neighborhood $U\subset\bbC$ of $c_0$, such that for all $c\in U$, the inverse branch $f_{c}^{\circ (- n)}$ is defined on $V$, the inclusion $f_{c}^{\circ (- n)}(V)\Subset V$ holds, and for any $z\in V$, the analytic functions
$$
h_{k,z}(c):= [(f_c^{\circ (nk)})'(f_c^{\circ(-nk)}(z))]^{1/(nk)}
$$
converge to $g_{\mathcal O}$ uniformly in $z\in V$ and $c\in U$, for appropriate branches of the roots, as $k\to\infty$.
\end{proposition}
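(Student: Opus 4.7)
The plan has two parts: first, establish the neighborhood $U$ by a stability argument for the inverse branch under perturbation of $c$; second, express $(f_c^{\circ nk})'(f_c^{\circ(-nk)}(z))$ as a telescoping product along the inverse orbit and apply a Cesàro-type averaging argument.

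\textbf{Construction of $U$.} The assumption that $f_{c_0}^{\circ n}$ is univalent on $V$ with $\phi_{c_0}:=f_{c_0}^{\circ(-n)}$ satisfying $\phi_{c_0}(V)\Subset V$ means in particular that $f_{c_0}^{\circ n}(V)\supset\overline V$, so $f_{c_0}^{\circ n}$ is actually univalent on a domain slightly larger than $V$. By continuity of $(c,z)\mapsto f_c^{\circ n}(z)$ together with the Hurwitz theorem on preservation of univalence, I can shrink $U$ so that, for every $c\in U$, the analytic continuation $\phi_c$ of $\phi_{c_0}$ in the parameter is defined and holomorphic on $V$ and still satisfies $\phi_c(V)\Subset V$. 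This last inclusion forces $\phi_c$ to be a strict hyperbolic contraction of $V$ with a contraction ratio $\lambda<1$ that is uniform in $c\in U$, and its unique attracting fixed point $z_0(c)\in V$ is the analytic continuation of the repelling periodic point $z_0$ of $f_{c_0}$.

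\textbf{Telescoping the derivative and averaging.} Writing $w_j:=\phi_c^{\circ j}(z)$ and $\psi_c:=f_c^{\circ n}$, the uniform hyperbolic contraction yields $w_j\to z_0(c)$ at geometric rate $\lambda$, uniformly in $(c,z)\in U\times V$. The chain rule applied to $\psi_c^{\circ k}(w_k)=z$ gives
$$
(f_c^{\circ nk})'(f_c^{\circ(-nk)}(z))=(\psi_c^{\circ k})'(w_k)=\prod_{j=1}^{k}\psi_c'(w_j).
$$
Since $\psi_c'(z_0(c))=\rho_{\mathcal O}(c)\neq 0$ on $U$ (shrinking $U$ if needed), and $w_j\to z_0(c)$ uniformly with geometric error, fixing any branch of logarithm on a small disk around $\rho_{\mathcal O}(c_0)$ yields $\log\psi_c'(w_j)\to\log\rho_{\mathcal O}(c)$ uniformly in $(c,z)$, with an error that is summable in $j$. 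Cesàro summation then gives
$$
\frac{1}{nk}\sum_{j=1}^{k}\log\psi_c'(w_j)\;\longrightarrow\;\frac{1}{n}\log\rho_{\mathcal O}(c)
$$
uniformly on $U\times V$. Exponentiating and selecting, for each $(k,z,c)$, the branch of the $(nk)$-th root lying closest to $g_{\mathcal O}(c)$ delivers $h_{k,z}(c)\to g_{\mathcal O}(c)$ uniformly.

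\textbf{Main obstacle.} The principal technical point is preserving uniformity in both $c$ and $z$ simultaneously at every step: for the inverse iterates this is furnished by the uniform hyperbolic contraction of $\phi_c$ established in the first step; for the Cesàro average it follows from the fact that the tail of $\sum_{j}(\log\psi_c'(w_j)-\log\rho_{\mathcal O}(c))$ is dominated by a uniform geometric series. The branch-of-root bookkeeping in the last step is routine once $\rho_{\mathcal O}(c)$ is known to remain in a compact subset of $\bbC\setminus\{0\}$ on $U$.
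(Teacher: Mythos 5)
Your proposal is correct and follows essentially the same route as the paper: extend the compactly-contracting inverse branch to nearby parameters $c$, obtain the unique analytically varying fixed point (the paper cites Denjoy--Wolff where you use the uniform hyperbolic contraction), and deduce the convergence of $h_{k,z}$ from the uniform convergence of the backward orbit, your telescoping/Ces\`aro computation merely making explicit the limit step the paper states directly. One phrase is slightly off but harmless: from $f_{c_0}^{\circ n}(V)\supset\overline V$ what you need (and in fact use) is that the inverse branch $\phi_{c_0}$ extends holomorphically to a neighborhood of $\overline V$ --- because $\overline{\phi_{c_0}(V)}\subset V$ where $f_{c_0}^{\circ n}$ is univalent --- not that $f_{c_0}^{\circ n}$ itself is univalent on a domain larger than $V$.
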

\begin{proof}
Since the inverse branch $f_{c_0}^{\circ (- n)}$ taking $V$ compactly inside itself, is defined on a domain that compactly contains $V$, it follows that the same holds for $f_{c}^{\circ (- n)}$, where $c$ is any parameter from a sufficiently small neighborhood $U$ of $c_0$.

According to Denjoy-Wolff Theorem, for any $c\in U$, the map $f_c^{\circ n}$ has a unique repelling fixed point $z_c$ that depends analytically on $c$ and coincides with $z_0$, when $c=c_0$. This implies that the map $g_{\mathcal O}$ is defined for all $c\in U$.

Finally, since for any $c\in U$ and $z\in V$, the sequence of points $\{f_c^{\circ(-nk)}(z)\}_{k=1}^\infty$ converges to $z_c$ uniformly in $z\in V$, it follows that
$$
\lim_{k\to\infty}h_{k,z}(c) = ((f_c^{\circ n})'(z_c))^{1/n} = g_{\mathcal O}(c),
$$
assuming that appropriate branches of the roots are chosen in the definition of $h_{k,z}(c)$.
\end{proof}

\begin{proposition}\label{dense_preimage_prop}
Let $c,z_0\in\bbC$ be such that $z_0$ is a repelling periodic point of $f_c$. Assume that $(c, z_0)\neq (-2,2)$. then there exists a sequence $z_{-1}, z_{-2},z_{-3},\ldots\in\bbC$ such that the following holds simultaneously:
\begin{enumerate}[(i)]
	\item\label{dense_prop_cond_1} the sequence $z_{-1}, z_{-2},z_{-3},\ldots$ is dense in the Julia set $J_c$;
	\item\label{dense_prop_cond_2} $f(z_{-j})=z_{1-j}$, for any $j\in\bbN$;
	\item\label{dense_prop_cond_3} $z_{-j}\neq 0$, for any $j\in\bbN$.	
\end{enumerate}
\end{proposition}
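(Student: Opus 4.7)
The plan is a standard inductive/diagonal construction of a backward orbit of $z_0$ that is dense in $J_c$, with the additional constraint of avoiding $0$ at each step. I would begin with the useful reduction that $f_c^{-1}(c)=\{0\}$, so the requirement $z_{-j}\neq 0$ for all $j\geq 1$ is equivalent to $z_{-j}\neq c$ for all $j\geq 0$. At $j=0$ this is automatic: if $z_0=c=f_c(0)$, then $0$ would lie on the cycle of $z_0$, making the multiplier of that cycle zero and contradicting repellence.

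Next, I would fix a countable basis $\{U_k\}_{k\geq 1}$ of open sets in $\bbC$, each intersecting $J_c$, and extend the orbit by induction: given $z_0,\ldots,z_{-N}$ with all $z_{-j}\neq c$, find preimages of $z_{-N}$ under some iterate $f_c^{\circ n}$ that lie in the next unvisited $U_k$ and whose forward orbit back to $z_{-N}$ avoids $c$. This reduces the proof to the following claim: \emph{for every $z\in J_c\setminus\{c\}$ with $(c,z)\neq(-2,2)$ and every open $U$ meeting $J_c$, there exist $n\geq 1$ and $w\in U\cap J_c$ with $f_c^{\circ n}(w)=z$ and $f_c^{\circ k}(w)\neq c$ for all $0\leq k<n$.} I would check that the hypothesis $(c,z_{-N})\neq(-2,2)$ propagates from $(c,z_0)\neq(-2,2)$: if $c=-2$, then $z_{-N}=2$ would force $z_{-j}=2$ for all $0\leq j\leq N$ by backward iteration (since $2=f_{-2}(2)$ and the only preimage of $2$ distinct from $c=-2$ is $2$ itself), giving $z_0=2$, a contradiction.

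To establish the claim, my approach is a counting argument via Brolin's equidistribution theorem: for any non-exceptional $z\in J_c$, $2^{-N}\sum_{w\in f_c^{-N}(z)}\delta_w$ converges weakly to the equilibrium measure $\mu_c$ of $J_c$, so $|f_c^{-N}(z)\cap U|\sim\mu_c(U)\cdot 2^N$ with $\mu_c(U)>0$. A ``bad'' preimage $w$ satisfies $f_c^{\circ k}(w)=c$ for some $0\leq k<N$, which forces $z=f_c^{\circ(N-k)}(c)$. Setting $T=\{t\geq 1:f_c^{\circ t}(c)=z\}$, the bad preimages lie in $\bigcup_{t\in T,\,t\leq N}f_c^{-(N-t)}(c)$; a second application of Brolin (to preimages of $c$, which is non-exceptional whenever $c\neq 0$; when $c=0$ the set $T$ is empty since $z\neq 0$) bounds their count in $U$ by $\sim\mu_c(U)\cdot 2^N\sum_{t\in T,\,t\leq N}2^{-t}$. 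Since the forward orbit of $c$ is either aperiodic or eventually periodic, the set $T$ is empty, a singleton, or an arithmetic progression $\{m,m+n,m+2n,\ldots\}$, yielding $\sum_{t\in T}2^{-t}\leq 2^{-m}/(1-2^{-n})\leq 1$ with equality iff $m=n=1$. Equality forces $z=c^2+c$ to be a fixed point of $f_c$; solving $z^2+c=z$ together with $z=c^2+c$ gives $(c,z)\in\{(0,0),(-2,2)\}$, the first excluded because $0$ is superattracting (not repelling) for $f_0$ and the second excluded by hypothesis. Hence the sum is strictly less than $1$, good preimages in $U$ exist for all large $N$, and the claim follows. The main subtlety I anticipate is the careful combinatorial bookkeeping of the bad preimages and the verification that the only obstruction to the geometric series bound is precisely the excluded pair $(-2,2)$.
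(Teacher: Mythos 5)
Your argument is correct, but it takes a genuinely different route from the paper. The paper's proof is elementary and combinatorial: it builds a dense backward orbit by successive preimages, and whenever the orbit is forced through the critical value $c$ it reroutes through the other preimage of the same point; the only configuration in which rerouting is impossible is when $z_0$ is a fixed point with $f_c^{-1}(z_0)=\{z_0,c\}$, which pins down exactly $(c,z_0)=(-2,2)$. You instead run a counting argument: Brolin's equidistribution theorem applied to preimages of $z$ gives roughly $\mu_c(U)2^N$ preimages in $U$, a second application to preimages of $c$ (together with the observation that the hitting-time set $T$ is empty, a singleton, or an arithmetic progression $\{m,m+n,\dots\}$) bounds the ``bad'' preimages by roughly $\mu_c(U)2^N\sum_{t\in T}2^{-t}$, and the equality case $2^{-m}/(1-2^{-n})=1$, i.e. $m=n=1$, forces $(c,z)\in\{(0,0),(-2,2)\}$ --- the first vacuous since $0\notin J_0$ (that, rather than ``$0$ is not repelling'', is the clean reason: the intermediate points $z_{-N}$ need not be periodic), the second excluded by hypothesis and correctly propagated along the orbit since $f_{-2}(2)=2$. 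Your approach costs heavier machinery but buys more: a definite positive proportion of $N$-th preimages in any open set meeting $J_c$ are admissible, which is quantitative and robust, whereas the paper's proof is short and self-contained. Two routine points you flagged but should nail down in a full write-up: when invoking weak convergence, compare an open set $U'$ with $\overline{U'}$ (e.g. choose $U'$ with $\mu_c(\partial U')=0$) so that the upper bound on bad preimages and the lower bound on all preimages use the same constant; and for the finitely many levels $t\in T$ with $N-t$ below the threshold where Brolin's asymptotics kick in, use the trivial bound $2^{N-t}$, whose total contribution is $O(1)$ in $N$ and hence harmless. With these adjustments the inequality $\sum_{t\in T}2^{-t}<1$ indeed yields good preimages in $U$ for all large $N$, and the inductive construction over a countable basis gives density.
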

\begin{proof}
Existence of a sequence that satisfies~\ref{dense_prop_cond_1} and~\ref{dense_prop_cond_2}, follows immediately from the fact that the set of preimages of any point in the Julia set $J_c$ is dense in $J_c$. Indeed, from any point $z_{-k}$ one can land in any arbitrarily small region of $J_c$, by taking an appropriate sequence of preimages of $z_{-k}$. We can continue this process, making sure that any arbitrarily small region of $J_c$ is eventually visited by our sequence.
Furthermore, property~\ref{dense_prop_cond_2} implies that if $z_{-k}$ does not belong to the periodic orbit of $z_0$, then for every $j\ge k$, the element $z_{-j}$ is different from any other element of the entire sequence $z_0,z_{-1},z_{-2},\ldots$, no matter, how the sequence of preimages of $z_{-k}$ was chosen.

Property~\ref{dense_prop_cond_3} is equivalent to the property that $z_{-j}\neq c$, for any $j\in\bbN\cup\{0\}$, since $c$ is the unique point that has only one preimage under the map $f_c$, and that preimage is $0$. 

Let $\mathcal O$ be the periodic orbit of $f_c$ that contains $z_0$. First of all, we note that $c\not\in\mathcal O$. Otherwise, if $c\in\mathcal O$, then $0\in\mathcal O$, since $0$ is the unique preimage of $c$, and the orbit $\mathcal O$ is super-attracting, which contradicts the assumption of the proposition.

Assume that the sequence, constructed in the first paragraph of the proof, violates property~\ref{dense_prop_cond_3}. Let $j\in\bbN$ be such that $z_{-j}=c$. This number $j$ is unique, since $c\not\in\mathcal O$, so all further preimages of $c$ must differ from $c$.
If $z_{1-j}\not\in\mathcal O$, then we can modify $z_{-j}$ by taking it to be equal to another preimage of $z_{1-j}$. After that we can construct the remaining ``tail'' of the sequence by the same process, as described in the first paragraph. Since $z_{1-j}\not\in\mathcal O$, no further element of the sequence will ever return to $z_{1-j}$, hence, the sequence is guaranteed to avoid the critical value $c$.

It follows from the construction, described in the previous paragraph, that the sequence $z_{-1},z_{-2},\ldots$ satisfying properties~\ref{dense_prop_cond_1}-\ref{dense_prop_cond_3}, can be constructed, if at least one point of the periodic orbit $\mathcal O$ has a preimage under $f_c$ that does not belong to $\mathcal O$ and is not simultaneously equal to~$c$. This condition is always satisfied, unless $z_0$ is a fixed point whose two preimages are $z_0$ and $c$. The latter happens only when $c=-2$ and $z_0=2$.
\end{proof}

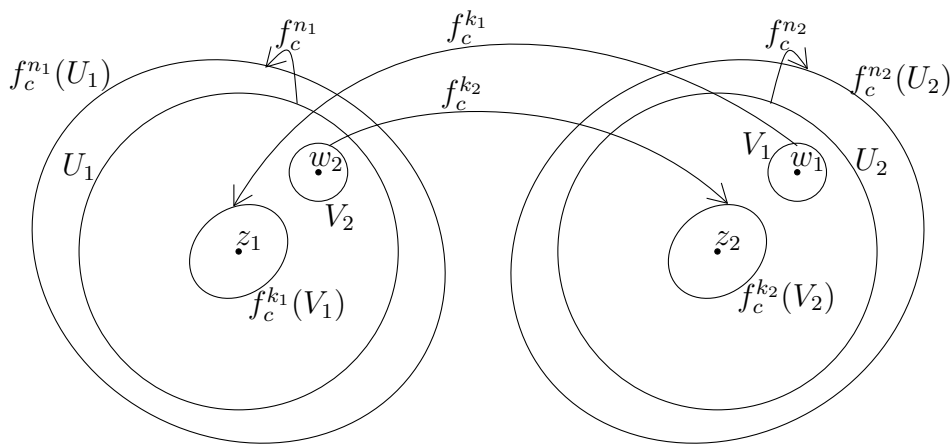
\begin{figure}[h!]
\begin{tikzpicture}[scale =0.7]
\draw (0,0) circle (3);

\filldraw (0,0) circle (0.05);
\draw (0.2,0.2) node {$z_1$};
\filldraw (1.5,1.5) circle (0.05);
\draw (1.65,1.7) node {$w_2$};
\draw (1.9,0.65) node {$V_2$};
\draw (1.5,1.5) circle (0.55);
\draw (-3.35,3.35) node {$f_c^{n_1}(U_1)$};
\draw[-{Straight Barb[angle'=60,scale=3]}] (1.7,2) .. controls (3.3, 3) and (7, 3) .. (9.2,0.9);
\draw (4.3,4.25) node {$f_c^{k_1}$};
\draw (4.2, 3) node{$f_c^{k_2}$};
\draw[-{Straight Barb[angle'=60,scale=3]}] (10.5,2) .. controls (7.3, 4.5) and (2.3, 5) .. (-0.1,0.85);
\draw[-{Straight Barb[angle'=60,scale=3]}] (1.1,2.8) .. controls (1, 4) .. (0.5,3.48);
\draw[-{Straight Barb[angle'=60,scale=3]}] (10,2.8) .. controls (10.3, 4) .. (10.7,3.45);
\draw (1.1, 4.1) node{$f_c^{n_1}$};
\draw (10.3, 4.1) node {$f_c^{n_2}$};

\draw[rotate=60] (0,0) ellipse (3.5 and 4.0);
\draw (-3,1.6) node {$U_1$};

\draw[rotate around={40:(0,0)}] (0,0) ellipse (1 and 0.8);
\draw (1.1,-1) node {$f_c^{k_1}(V_1)$};

\draw (9,0) circle (3);

\filldraw (9,0) circle (0.05);
\draw (9.2,0.2) node {$z_2$};
\filldraw (10.5,1.5) circle (0.05);
\draw (10.7,1.7) node {$w_1$};
\draw (9.75,2) node {$V_1$};
\draw (10.5,1.5) circle (0.55);
\draw (12.45,3.25) node {$f_c^{n_2}(U_2)$};

\draw[rotate around ={-60:(9,0)}] (9,0) ellipse (3.5 and 4.0);
\draw (11.9,1.7) node {$U_2$};

\draw[rotate around={40:(9,0)}] (9,0) ellipse (1 and 0.8);
\draw (10.3,-0.85) node {$f_c^{k_2}(V_2)$};
\end{tikzpicture}
\caption{Maps and domains from the proof of Lemma~\ref{averaging_lemma}.}\label{Av_lemma_pic}
\end{figure}

\begin{proof}[Proof of Lemma~\ref{averaging_lemma}]
Let $n_1$ and $n_2$ be the periods of the periodic orbits $\mathcal O_1$ and $\mathcal O_2$ respectively. Let $z_1$ and $z_2$ be some periodic points from each of the orbits $\mathcal O_1$ and $\mathcal O_2$. Since the orbits $\mathcal O_1$ and $\mathcal O_2$ are repelling, there exist a simply connected neighborhood $U$ of $c_0$ and two neighborhoods $U_1$ and $U_2$ of $z_1$ and $z_2$ respectively, such that for all $c\in U$, the maps $f_c^{\circ n_1}$ and $f_c^{\circ n_2}$ are univalent on $U_1$ and $U_2$ respectively, and $f_c^{\circ n_1}(U_1)\setminus U_1$ and $f_c^{\circ n_2}(U_2)\setminus U_2$ are two annuli.

According to Proposition~\ref{dense_preimage_prop}, there exist $k_1,k_2\in\bbN$, $w_1\in U_2$ and $w_2\in U_1$, such that 
$$
f_{c_0}^{\circ k_1}(w_1)=z_1, \qquad f_{c_0}^{\circ k_2}(w_2)=z_2,
$$ 
$$
(f_{c_0}^{\circ k_1})'(w_1)\neq 0, \quad\text{and}\quad (f_{c_0}^{\circ k_2})'(w_2)\neq 0.
$$ 
Possibly, after shrinking the neighborhood $U$ of $c_0$, there exist a constant $K>1$ and the neighborhoods $V_1\Subset U_2$ and $V_2\Subset U_1$ of $w_1$ and $w_2$ respectively, such that for any $c\in U$ and $j\in\{1,2\}$, the following holds (see Figure~\ref{Av_lemma_pic}):

\begin{enumerate}[(a)]
	\item $f_c^{\circ k_j}$ is univalent on $V_j$ and maps it inside $U_j$.
	\item\label{V_j_property_2} The neighborhood $f_c^{\circ k_j}(V_j)$ contains a repelling periodic point of period $n_j$ for the map $f_c$. (For $c=c_0$, this periodic point is $z_j$, while for other $c\in U$ it is its perturbation.)
	\item For any $z\in V_j$, we have 
	\begin{equation}\label{K_inequalities}
	K^{-k_j}<|(f_c^{\circ k_j})'(z)|<K^{k_j}.	
	\end{equation}
\end{enumerate}

Let $N\in\bbN$ be a sufficiently large number, such that for any $N_1, N_2\ge N$ and any $c\in U$, we have
\begin{equation}\label{V_j_inclusions}
f_c^{\circ (-n_1N_1)}(V_2) \Subset  f_c^{\circ k_1}(V_1)\quad\text{and}\quad 
f_c^{\circ (-n_2N_2)}(V_1) \Subset  f_c^{\circ k_2}(V_2),
\end{equation}
for the inverse branches of $f_c$ that take $U_1$ into itself in the case of the first inclusion, and $U_2$ into itself in the case of the second one. Existence of such a number $N$ follows from property~\ref{V_j_property_2}. 

Assume, $N_1,N_2\in\bbN$ satisfy the condition $N_1,N_2\ge N$. Then for every $c\in U$, one may consider the following composition of inverse branches of $f_c$:
$$
V_1\xrightarrow{\enspace f_c^{\circ (-n_2N_2)} \enspace} f_c^{\circ k_2}(V_2) \xrightarrow{\enspace f_c^{\circ (-k_2)} \enspace}
V_2 \xrightarrow{\enspace f_c^{\circ (-n_1N_1)} \enspace} f_c^{\circ k_1}(V_1) \xrightarrow{\enspace f_c^{\circ (-k_1)} \enspace} V_1.
$$
Let us denote this composition by $h_c\colon V_1\to V_1$. By construction, this is a univalent map, and the inclusions~(\ref{V_j_inclusions}) imply that $h_c(V_1)\Subset V_1$. Then, according to the Denjoy-Wolff Theorem, the map $h_c$ has a unique fixed point in $V_1$, which is a repelling periodic point of period
$$
M=n_1N_1+n_2N_2+k_1+k_2
$$
for the map $f_c$. Let $\mathcal O_{N_1,N_2}$ denote the periodic orbit of such a point when $c=c_0$. Then the map $g_{\mathcal O_{N_1,N_2}}$ is defined in $U$. 

After, possibly shrinking the neighborhood $U$ of $c_0$, we may apply Proposition~\ref{nest_prop} for $V=V_1$ and $V=V_2$. A direct computation shows that for appropriate branches of the roots, we have
\begin{equation}\label{g_c_OO_eq}
g_{\mathcal O_{N_1,N_2}}(c) = (h_{N_1,\hat z}(c))^{\frac{n_1N_1}{M}} (h_{N_2,\tilde z}(c))^{\frac{n_2N_2}{M}}
(\beta(c))^{\frac{k_1}{M}}(\gamma(c))^{\frac{k_2}{M}},
\end{equation}
where $h_{N_1,\hat z}$ and $h_{N_2,\tilde z}$ are the functions from Proposition~\ref{nest_prop}, $\hat z\in V_2$ and $\tilde z\in V_1$ are some points that depend on $N_1$, $N_2$ and $c\in U$, and the holomorphic functions $\beta$ and $\gamma$ satisfy
$$
K^{-1}<|\beta(c)|, |\gamma(c)|<K,
$$
where $K$ is the same as in~(\ref{K_inequalities}). 
Now, Proposition~\ref{nest_prop} and~(\ref{g_c_OO_eq}) imply that if $N_1,N_2\to\infty$, so that
$$
\frac{n_1N_1}{n_1N_1+n_2N_2}\to\alpha,
$$
then
\begin{equation}\label{g_c_OO_convergence_eq}
g_{\mathcal O_{N_1,N_2}}(c) \to s\cdot (g_{\mathcal O_1}(c))^\alpha (g_{\mathcal O_2}(c))^{1-\alpha},
\end{equation}
uniformly in $c\in U$, for appropriate fixed branches of the degree maps $z\mapsto z^\alpha$ and $z\mapsto z^{1-\alpha}$, and some constant $s\in\bbC$, such that $|s|=1$.

Finally, the proof of Lemma~\ref{averaging_lemma} can be completed by taking logarithmic derivatives of both sides in~(\ref{g_c_OO_convergence_eq}) and applying Proposition~\ref{log_der_prop}.
\end{proof}

\section{The sets $\mathcal Y_c$}
We start this section by giving a proof of Theorem~\ref{main_theorem_2}. 
We note that our proof of part~\ref{YX_dichotomy} of Theorem~\ref{main_theorem_2}, providing the necessary and sufficient condition for $c\in\bbC\setminus\M$ to be contained in $\mathcal X$, seriously depends on the assumption that $c\not\in\M$. Furthermore, the condition itself seems to be wrong for some $c\in\partial\M$, (c.f. Remark~\ref{where_proof_rails_remark}). Indeed, the case $c\in\M$ appears to be more delicate. In the second part of this section we provide a sufficient condition for $c\in\M$ to be contained in $\mathcal X$. Later, in Section~\ref{M_in_X_section} we show that this condition is satisfied for any $c\in\M$.

\subsection{Proof of Theorem~\ref{main_theorem_2}}
In order to prove property~\ref{YX_dichotomy} of Theorem~\ref{main_theorem_2}, we need the following lemma:

\begin{lemma}\label{nu_normality_lemma}
	For any $c\in\bbC\setminus\partial\M$, the family of maps $\{\nu_{\mathcal O}\mid \mathcal O\in\Omega_c\}$ is defined and is normal on any simply connected neighborhood $U\subset\bbC$, such that $c\in U$ and $U\cap\partial\M=\varnothing$. Furthermore, if $c\in\bbC\setminus\M$, then the identical zero is not a limiting map of the normal family $\{\nu_{\mathcal O}\mid \mathcal O\in\Omega_c\}$.
\end{lemma}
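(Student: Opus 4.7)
The plan is to reduce everything to the auxiliary functions $g_{\mathcal O}(c) = \rho_{\mathcal O}(c)^{1/|\mathcal O|}$ (since $\nu_{\mathcal O} = g'_{\mathcal O}/g_{\mathcal O}$ by Proposition~\ref{log_der_prop}) and then: (i) establish normality of $\{g_{\mathcal O}\}$ by a straightforward Montel argument, and (ii) exclude constant subsequential limits of $\{g_{\mathcal O}\}$ when $c\in\bbC\setminus\M$ by a Lyapunov-type/harmonic argument on a simply connected slit subdomain of $\bbC\setminus\M$ reaching $\infty$.

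For (i), I first observe that a branch point of $\rho_{\mathcal O}$ would correspond to a collision of periodic orbits, which can occur only at a parabolic parameter, hence only on $\partial\M$. Since $U$ is simply connected with $U\cap\partial\M=\varnothing$, and since repelling orbits remain repelling on such a $U$, each $\rho_{\mathcal O}$ extends to a single-valued analytic nonvanishing function on $U$; so $g_{\mathcal O}$ is well defined with $|g_{\mathcal O}|>1$. On any compact $K\subset U$, a uniform bound $J_c\subset\{|z|\leq R\}$ yields $|\rho_{\mathcal O}(c)|\leq(2R)^{|\mathcal O|}$ and hence $|g_{\mathcal O}(c)|\leq 2R$, so $\{g_{\mathcal O}\}$ is locally uniformly bounded and thus normal by Montel's theorem. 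By Hurwitz each limit $g_\infty$ is nonvanishing, and by the min modulus principle it is either a constant of modulus $\geq 1$ or satisfies $|g_\infty|>1$ strictly. In either case $\nu_{\mathcal O_n}=g'_{\mathcal O_n}/g_{\mathcal O_n}$ converges locally uniformly to $g'_\infty/g_\infty$, which vanishes identically exactly when $g_\infty$ is constant; hence $\{\nu_{\mathcal O}\}$ is normal.

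For (ii), I argue by contradiction: suppose along some subsequence $g_{\mathcal O_n}\to\alpha$ for a constant $\alpha$ with $|\alpha|\geq 1$. Fix $c_0\in U$ and choose a simply connected slit subdomain $V=\bbC\setminus(\M\cup\gamma)\supset U$, where $\gamma$ is a curve from $\partial\M$ to $\infty$ disjoint from $U$; on $V$ the $\lambda$-lemma produces a single-valued holomorphic motion $h_c\colon J_{c_0}\to J_c$. Writing
$$
\tfrac{1}{|\mathcal O_n|}\log|\rho_{\mathcal O_n}(c)|=\log 2+\int\log|h_c(w)|\,dm_n(w),\qquad m_n:=\tfrac{1}{|\mathcal O_n|}\sum_{z\in\mathcal O_n(c_0)}\delta_z,
$$
and extracting a further subsequence with $m_n\to\mu$ weakly (some probability measure on $J_{c_0}$), the assumption forces
$$
F(c):=\log 2+\int\log|h_c(w)|\,d\mu(w)\equiv\log|\alpha|\quad\text{on }U.
$$
Since $c\in\bbC\setminus\M$ implies $0\notin K_c$, we have $h_c(w)\neq 0$ on $V\times J_{c_0}$, so $c\mapsto\log|h_c(w)|$ is harmonic on $V$ for each fixed $w$, and uniform bounds on compact subsets of $V\times J_{c_0}$ make $F$ harmonic on $V$. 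By analytic continuation of harmonic functions on the connected $V$, $F\equiv\log|\alpha|$ on all of $V$. The contradiction comes from the elementary estimate $J_c\subset\{|z|\geq\tfrac{1}{2}\sqrt{|c|}\}$ for $|c|$ large (any smaller $z$ has $|f_c(z)|\geq|c|-|z|^2\geq\tfrac{3}{4}|c|$, and subsequent iterates escape), which gives $|h_c(w)|\gtrsim\sqrt{|c|}$ uniformly in $w\in J_{c_0}$ and hence $F(c)\to\infty$ as $c\to\infty$ in $V$.

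The hard part will be the second half: the domain $\bbC\setminus\M$ is not simply connected, so the holomorphic motion of periodic points may acquire monodromy when continued around $\M$, and the slit construction is what permits globalization to a domain reaching infinity. Once harmonicity of $F$ on $V$ (as an average of harmonic functions with uniform local bounds) and the growth $F(c)\to\infty$ at $\infty$ are verified, analytic continuation of the constant value from $U$ to $V$ closes the argument.
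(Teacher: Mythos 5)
Your proof is correct, and while the normality half coincides with the paper's argument (reduce to the locally uniformly bounded family $g_{\mathcal O}=\rho_{\mathcal O}^{1/|\mathcal O|}$, note repelling orbits stay repelling and no branching occurs off $\partial\M$, then pass log-derivatives to the limit), your exclusion of the identically-zero limit takes a genuinely different route. The paper simply enlarges $U$ ``without loss of generality'' to an unbounded simply connected domain in $\bbC\setminus\M$ and uses the two-sided estimate $2\sqrt{|\tl c|}-2<|g_{\mathcal O}(\tl c)|<2\sqrt{|\tl c|}+2$ near infinity, valid uniformly over all orbits, to conclude that no locally uniform limit of the $g_{\mathcal O}$'s can be constant; this is shorter but leans on the analytic continuation of the family itself beyond $U$. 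You instead keep the hypothesis on $U$, rewrite $\tfrac{1}{|\mathcal O_n|}\log|\rho_{\mathcal O_n}(c)|$ as $\log 2$ plus an average of $\log|h_c(w)|$ against the orbit measures $m_n$, pass to a weak-$*$ limit $\mu$, and continue the resulting harmonic function $F$ from $U$ to a slit domain $V$ reaching infinity, where the same $\sqrt{|c|}$-growth of the Julia set gives the contradiction; the payoff is that only the holomorphic motion, not the multiplier functions, needs to be single-valued on the large domain, so the monodromy of $\rho_{\mathcal O}$ around $\M$ is never an issue. The one technical point you should patch is the claim that a slit $\gamma$ from $\partial\M$ to $\infty$ can be chosen disjoint from $U$: for an exotic (say, spiraling) simply connected $U$ this may fail as stated, but it is harmless --- if the zero map were a limit on $U$ it is a limit on a small disk $U_0\ni c$ with $\overline{U_0}\subset\bbC\setminus\M$, and for such a $U_0$ the slit clearly exists; this shrinking step plays the same role as the paper's terse ``WLOG $U$ is unbounded.'' Minor quibbles (the escape argument behind $J_c\subset\{|z|\ge\tfrac12\sqrt{|c|}\}$ needs one extra iterate before the standard escape radius applies, and harmonicity of $F$ should invoke the joint continuity and local positive lower bound of $|h_c(w)|$) are easily supplied and do not affect the argument.
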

\begin{proof}
	Fix $c\in\bbC\setminus\partial\M$ and a neighborhood $U$ as in the statement of the lemma. Since $U\cap\partial\M=\varnothing$, all repelling periodic orbits of $f_{c_0}$ remain to be repelling after analytic continuation in $c\in U$. This implies that all maps from the family 
	$$
	\mathcal G_c:=\{g_{\mathcal O}\mid \mathcal O\in\Omega_c\},
	$$
	are defined in the neighborhood $U$ and are analytic in it. (We recall that the maps $g_{\mathcal O}$ were defined in~(\ref{g_cO_def_eq}) and are appropriate branches of the roots of the multipliers.) Furthermore, since all functions from $\mathcal G_c$ are locally uniformly bounded, 
	the family $\mathcal G_c$ is normal in $U$. Together with Proposition~\ref{log_der_prop}, this implies normality of the family $\{\nu_{\mathcal O}\mid \mathcal O\in\Omega_c\}$.

	If $c\in\bbC\setminus\M$, then without loss of generality we may assume that the domain $U$ is simply connected and unbounded.  
	Since for all $\tl c\in\bbC$ sufficiently close to $\infty$, the Julia set $J_{\tl c}$ is contained in the annulus centered at zero with inner and outer radii being equal to $\sqrt{|\tl c|}\pm 1$, it follows that for every $\tl c\in U$ sufficiently close to $\infty$ and for any $\mathcal O\in\Omega_c$, we have
	\begin{equation}\label{g_O_double_inequality}
	2\sqrt{|\tl c|}-2<|g_{\mathcal O}(\tl c)|< 2\sqrt{|\tl c|}+2,
	\end{equation}
	which implies that none of the limiting maps of the family $\mathcal G_c$ is a constant map. Then it follows that the identical zero is not a limiting map of the normal family $\{\nu_{\mathcal O}\mid \mathcal O\in\Omega_c\}$.
\end{proof}

\begin{proof}[Proof of Theorem~\ref{main_theorem_2}]
First, we observe that property~\ref{Y_convex_property} of Theorem~\ref{main_theorem_2} is an immediate corollary from the Averaging Lemma (Lemma~\ref{averaging_lemma}). Indeed, if $c\neq-2$, then convexity of $\mathcal Y_c$ is obvious from Lemma~\ref{averaging_lemma}. On the other hand, if $c=-2$, then according to the same lemma, the set $\mathcal Y_{-2}$ is the union of a convex set and a single point $\nu_{\{2\}}(-2)$, corresponding to the periodic orbit $\mathcal O=\{2\}$. A direct computation shows that 
$$
\rho_{\{2\}}(-2)= 4, \qquad \rho_{\{2\}}'(-2)= -2/3,
$$
hence, $\nu_{\{2\}}(-2)=-1/6$.

We proceed with the proof of part~\ref{YX_dichotomy} as follows: for $c\in\bbC\setminus\M$, let $U$ be a neighborhood of $c$ that satisfies the conditions of Lemma~\ref{nu_normality_lemma}. First, we observe that according to Lemma~\ref{nu_normality_lemma}, the family $\{\nu_{\mathcal O}\mid \mathcal O\in\Omega_c\}$, defined on $U$, is locally uniformly bounded, hence, the set $\mathcal Y_c$ is bounded.

\underline{\textit{Necessary condition for $c\in\mathcal X$}}: If $c\in\mathcal X$, then 
there exists a sequence of points $\{c_k\}_{k=1}^\infty$ and a sequence of periodic orbits $\{\mathcal O_k\}_{k=1}^\infty\subset\Omega_c$, such that 
$$
\lim_{k\to\infty} c_k =c\qquad\text{and}\qquad \rho_{\mathcal O_k}'(c_k)=0,\quad\text{for any } k\in\bbN.
$$
According to Lemma~\ref{nu_normality_lemma}, after extracting a subsequence, we may assume that the sequence of maps $\nu_{\mathcal O_k}$ converges to some holomorphic map $\nu\colon U\to\bbC$ uniformly on compact subsets of $U$. Since for any $k\in\bbN$, we have $\nu_{\mathcal O_k}(c_k)=0$, it follows by continuity that $\nu(c)=0$. Finally, convergence of the maps $\nu_{\mathcal O_k}$ to $\nu$ implies that
$$
\lim_{k\to\infty}\nu_{\mathcal O_k}(c)=\nu(c)=0,
$$
hence, $0\in\mathcal Y_c$.

\underline{\textit{Sufficient condition for $c\in\mathcal X$}}: On the other hand, if $0\in\mathcal Y_c$, then either there exists a periodic orbit $\mathcal O\in\Omega_c$, such that $\nu_{\mathcal O}(c)=0$ or there exists a sequence of periodic orbits $\{\mathcal O_k\}_{k=1}^\infty\subset\Omega_c$, such that 
$$
\lim_{k\to\infty} \nu_{\mathcal O_k}(c)=0.
$$
In the first case, $\rho_{\mathcal O}'(c)=0$, so $c\in\mathcal X$ according to Theorem~\ref{XoutsideM_theorem}.

In the second case, according to Lemma~\ref{nu_normality_lemma}, after extracting a subsequence, we may assume that the sequence of maps $\nu_{\mathcal O_k}$ converges to some holomorphic map $\nu\colon U\to\bbC$ uniformly on compact subsets of $U$. By continuity, we have $\nu(c)=0$, and, according to Lemma~\ref{nu_normality_lemma}, $\nu\not\equiv 0$. Then it follows from Rouch\'e's Theorem that for any sufficiently large $k\in\bbN$, there exists $c_k\in U$, such that $\nu_{\mathcal O_k}(c_k)=0$ and $\lim_{k\to\infty}c_k=c$. The latter implies that $c\in\mathcal X$, and completes the proof of Theorem~\ref{main_theorem_2}. 
\end{proof}

\begin{remark}\label{where_proof_rails_remark}
The above proof of part~\ref{YX_dichotomy} of Theorem~\ref{main_theorem_2} fails without the assumption $c\not\in\M$. Indeed, if $c\in\partial\M$, then the neighborhood $U$ from Lemma~\ref{nu_normality_lemma} does not exist. Furthermore, even though $\partial\M\subset\mathcal X$ (since $\partial\M$ is the support of the bifurcation measure $\mbif$) and $-2\in\partial\M$, the preliminary computations indicate that the set $\mathcal Y_{-2}$ seems to be disjoint from~$0$. In the case $c\in\M\setminus\partial\M$, the above proof of the sufficient condition for $c\in\mathcal X$ fails, since the limiting map $\nu$ might turn out to be the identical zero. 
\end{remark}

\subsection{A sufficient condition for $c\in\M$ to be contained in $\mathcal X$}
In this subsection we prove the following sufficient condition for $c\in\bbC\setminus\partial\M$ to be contained in $\mathcal X$.  

\begin{lemma}\label{suff_cond_lemma}
Let $c\in\bbC\setminus\partial\M$ be an arbitrary parameter. If there exist finitely many repelling periodic orbits $\mathcal O_1,\mathcal O_2,\ldots,\mathcal O_k\in\Omega_c$, such that $0$ is contained in the convex hull of the points $\nu_{\mathcal O_1}(c),\ldots, \nu_{\mathcal O_k}(c)$, then $c\in\mathcal X$.
\end{lemma}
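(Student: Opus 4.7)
The plan is to iterate the Averaging Lemma to build a sequence of distinct repelling periodic orbits of growing period whose associated functions $\nu_{\mathcal O}$ converge uniformly on a neighborhood of $c$ to a prescribed convex combination of $\nu_{\mathcal O_1},\ldots,\nu_{\mathcal O_k}$ that vanishes at $c$, and then to extract critical points of the corresponding multipliers from this limit via Hurwitz's theorem.

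The hypothesis supplies weights $(\alpha_1,\ldots,\alpha_k)$ with $\alpha_i\geq 0$, $\sum\alpha_i=1$ and $\sum_{i=1}^k \alpha_i\nu_{\mathcal O_i}(c)=0$. Since $c\notin\partial\M$, Lemma~\ref{nu_normality_lemma} provides a simply connected neighborhood $U$ of $c$ with $U\cap\partial\M=\varnothing$ on which every $\nu_{\mathcal O}$ with $\mathcal O\in\Omega_c$ is analytic and the family $\{\nu_{\mathcal O}\mid \mathcal O\in\Omega_c\}$ is normal. I would then iterate the Averaging Lemma inductively---first on $\mathcal O_1,\mathcal O_2$ with weight $\alpha_1/(\alpha_1+\alpha_2)$, then averaging a resulting orbit (of sufficiently large period) with $\mathcal O_3$ using weight $\alpha_1+\alpha_2$, and so on---and extract a diagonal subsequence to obtain distinct repelling periodic orbits $\{\mathcal Q_j\}_{j=1}^\infty$ with $|\mathcal Q_j|\to\infty$ and $\nu_{\mathcal Q_j}\to\psi:=\sum_{i=1}^k\alpha_i\nu_{\mathcal O_i}$ uniformly on $U$ (after shrinking $U$ at most $k-1$ times).

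Assuming $\psi\not\equiv 0$, Hurwitz's theorem applied to $\nu_{\mathcal Q_j}\to\psi$ with $\psi(c)=0$ yields, for every sufficiently large $j$, a zero $c_j\in U$ of $\nu_{\mathcal Q_j}$ with $c_j\to c$. Since $\mathcal Q_j$ is repelling, $\rho_{\mathcal Q_j}(c_j)\neq 0$ by continuity from $|\rho_{\mathcal Q_j}(c)|>1$, so $\nu_{\mathcal Q_j}(c_j)=0$ forces $\rho'_{\mathcal Q_j}(c_j)=0$; hence $c_j\in X_{|\mathcal Q_j|}$, and $|\mathcal Q_j|\to\infty$ gives $c\in\mathcal X$, as desired.

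The main obstacle, already stressed in Remark~\ref{where_proof_rails_remark}, is to arrange $\psi\not\equiv 0$ when $c\in\M\setminus\partial\M$. I would address this by a dimension argument in the space of weights. The set $\mathcal A$ of convex combinations satisfying $\sum_i\alpha_i\nu_{\mathcal O_i}(c)=0$ is a nonempty convex subset of the simplex of real codimension at most $2$, whereas its subset $\mathcal B$ of weights with $\sum_i\alpha_i\nu_{\mathcal O_i}\equiv 0$ on $U$ is cut out by the real linear relations among the fixed holomorphic functions $\nu_{\mathcal O_1},\ldots,\nu_{\mathcal O_k}$ and is typically of strictly smaller dimension. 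Whenever $\mathcal A\not\subseteq\mathcal B$ one picks weights in $\mathcal A\setminus\mathcal B$; in the residual case, where evaluation at $c$ collapses $\mathcal A$ onto $\mathcal B$ (which forces the real span of $\nu_{\mathcal O_1},\ldots,\nu_{\mathcal O_k}$ to be at most two-dimensional), one augments $\{\mathcal O_i\}$ by additional repelling orbits from $\Omega_c$; each new orbit adds up to two real dimensions to $\mathcal A$ but at most one to $\mathcal B$, so finitely many augmentations break the coincidence and supply a valid non-trivial $\psi$.
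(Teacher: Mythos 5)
Your overall strategy (choose convex weights $\alpha_i$ with $\sum_i\alpha_i\nu_{\mathcal O_i}(c)=0$, iterate the Averaging Lemma with a diagonal extraction to get repelling orbits $\mathcal Q_j$ with $\nu_{\mathcal Q_j}\to\psi=\sum_i\alpha_i\nu_{\mathcal O_i}$ uniformly near $c$, then apply Hurwitz/Rouch\'e and conclude $\rho'_{\mathcal Q_j}(c_j)=0$ for some $c_j\to c$) is exactly the paper's route, and that part is fine (note only that $c\neq-2$, needed for Lemma~\ref{averaging_lemma}, follows from $c\notin\partial\M$). The genuine gap is where you yourself locate the difficulty: ruling out $\psi\equiv 0$. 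Your dimension argument does not do this. First, the counting is wrong: adjoining one new orbit adds a single real coordinate to the weight simplex, so the dimension of your set $\mathcal A$ (weights with $\sum_i\alpha_i\nu_{\mathcal O_i}(c)=0$) can grow by at most one, not ``up to two,'' while $\mathcal B$ can also grow by one; so the claimed differential between $\mathcal A$ and $\mathcal B$ does not accumulate. Second, and more fundamentally, the argument is circular: to guarantee that finitely many augmentations break the inclusion $\mathcal A\subseteq\mathcal B$ you would need to know that one can always find additional repelling orbits whose functions $\nu_{\mathcal O}$ are real-linearly independent of (or at least not trapped in the span of) the previous ones, and nothing in your proposal establishes this; a priori all the $\nu_{\mathcal O}$ could satisfy exactly the linear relations that keep $\mathcal A$ inside $\mathcal B$. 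Words like ``typically'' and ``residual case'' signal that this step is heuristic rather than proved.

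The paper closes this gap with Proposition~\ref{not_id_zero_prop}: since each multiplier $\rho_{\mathcal O_j}$ is an algebraic function that continues analytically over an unbounded simply connected domain containing the ray $(-\infty,-3)$ (which misses $\M$ and all branch points), one has $\rho_{\mathcal O_j}(\tilde c)\sim\pm(-4\tilde c)^{|\mathcal O_j|/2}$ and hence $\nu_{\mathcal O_j}(\tilde c)\sim 1/(2\tilde c)$ as $\tilde c\to-\infty$; therefore $\sum_j\alpha_j\nu_{\mathcal O_j}(\tilde c)\sim\bigl(\sum_j\alpha_j\bigr)/(2\tilde c)$, and since $\sum_j\alpha_j=1\neq 0$ the combination $\psi$ is not constant, in particular not identically zero --- for \emph{every} admissible choice of weights, so no case analysis on the weight simplex is needed at all. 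Some argument of this analytic-continuation/asymptotic type (or an equivalent non-degeneracy statement for real linear combinations of the $\nu_{\mathcal O}$) is the missing ingredient in your proposal; without it the proof is incomplete precisely in the case $c\in\M\setminus\partial\M$ for which Lemma~\ref{suff_cond_lemma} is actually used.
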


When $c\in\bbC\setminus\M$, the sufficient condition, given by Lemma~\ref{suff_cond_lemma}, is an immediate corollary of Theorem~\ref{main_theorem_2}, but we will use Lemma~\ref{suff_cond_lemma} for $c\in\M\setminus\partial\M$.

First, in order to prove Lemma~\ref{suff_cond_lemma}, we need the following proposition:

\begin{proposition}\label{not_id_zero_prop}
Let $c\in\bbC$ be an arbitrary parameter and let $\mathcal O_1,\mathcal O_2,\ldots,\mathcal O_k\in\Omega_c$ be a finite collection of repelling periodic orbits. If $\alpha_1,\ldots,\alpha_k\in\bbR$ are such that $\sum_{j=1}^k\alpha_j\neq 0$, then the map 
$$
\nu:= \sum_{j=1}^k\alpha_j\nu_{\mathcal O_j},
$$
defined in a neighborhood of the point $c$, is not a constant map.
\end{proposition}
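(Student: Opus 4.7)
The plan is to argue by contradiction: assume $\nu \equiv A$ for some constant $A \in \bbC$ on a neighborhood of $c$ and derive a contradiction from the asymptotic behavior of multipliers at infinity. By Proposition~\ref{log_der_prop}, the assumption is $\sum_{j=1}^k \alpha_j \frac{d}{dc}\log g_{\mathcal O_j}(c) \equiv A$, and integrating on a simply connected neighborhood yields
$$
F(c) := \sum_{j=1}^k \alpha_j \log g_{\mathcal O_j}(c) = Ac + B
$$
for some constant $B \in \bbC$ and fixed branches of the logarithms.

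I would then analytically continue each $g_{\mathcal O_j}$ along a path $\gamma$ in $\bbC$ starting at $c$ and reaching a point $c^*$ of arbitrarily large modulus. Each $\rho_{\mathcal O_j}$ is algebraic in $c$ and nonzero at $c$, so its branch and vanishing loci form a discrete set; a generic path $\gamma$ avoids them, and each $g_{\mathcal O_j}$ continues as a holomorphic nonvanishing germ along $\gamma$ whose primitive period $|\mathcal O_j|$ is preserved. The identity $F = Ac + B$ persists along $\gamma$, so at the endpoint we obtain $\sum_{j=1}^k \alpha_j \log \tilde g_j(c^*) = Ac^* + B'$, where $\tilde g_j$ denotes the continued germ of $g_{\mathcal O_j}$ and $B'$ differs from $B$ only by a $2\pi i$-multiple arising from logarithmic monodromy.

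For $|c^*|$ large we have $c^* \notin \M$, so every periodic orbit of $f_{c^*}$ is repelling and the estimate~(\ref{g_O_double_inequality}) applies to each $\tilde g_j(c^*)$, giving $\log|\tilde g_j(c^*)| = \tfrac{1}{2}\log|c^*| + O(1)$. Taking real parts in the continued identity then produces
$$
\frac{\sum_{j=1}^k \alpha_j}{2} \log |c^*| + O(1) = \mathrm{Re}(Ac^*) + \mathrm{Re}(B').
$$
If $A \neq 0$, choosing $\gamma$ to tend to infinity along a ray where $\mathrm{Re}(Ac^*)$ grows linearly in $|c^*|$ contradicts the logarithmic growth on the left; if $A = 0$, the right side is bounded while the left diverges since $\sum \alpha_j \neq 0$. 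The main obstacle I anticipate is carefully justifying the analytic continuation along a path avoiding all singularities while tracking branches of the logarithms, but the branch/zero locus of the multipliers is discrete and the asymptotic~(\ref{g_O_double_inequality}) is insensitive to monodromy (it depends only on each continued germ being an $|\mathcal O_j|$-th root of a multiplier of some period-$|\mathcal O_j|$ orbit of $f_{c^*}$), so the argument should go through.
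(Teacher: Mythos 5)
Your argument is correct, but it takes a genuinely different route from the paper's. The paper also passes to a neighborhood of infinity, but it works directly with $\nu$ rather than with a primitive of it: it chooses an unbounded simply connected domain containing the ray $(-\infty,-3)$ (which misses $\M$, hence all branch points, and on which the Julia sets are real), derives the precise asymptotics $\rho_{\mathcal O_j}(\tl c)\sim\pm(-4\tl c)^{|\mathcal O_j|/2}$ and hence $\nu_{\mathcal O_j}(\tl c)\sim 1/(2\tl c)$ as $\tl c\to-\infty$, and concludes at once that $\nu(\tl c)\sim\bigl(\sum_{j}\alpha_j\bigr)/(2\tl c)$ is nonconstant --- no contradiction setup and no case split. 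You instead integrate $\nu$ via Proposition~\ref{log_der_prop}, continue the branches of $\log g_{\mathcal O_j}$ along a path to a far-away parameter, and use only the modulus bound~(\ref{g_O_double_inequality}) to play off the logarithmic growth of $\sum_j\alpha_j\log|\tilde g_j(c^*)|$ against the constant (if $A=0$) or linear (if $A\neq0$, along a suitable ray) growth of $\Re(Ac^*+B)$. What your version buys is robustness: you never need the realness of the Julia set on $(-\infty,-3)$ or the sign and branch bookkeeping behind the asymptotic for $\rho$, only the crude fact that any $|\mathcal O|$-th root of a period-$|\mathcal O|$ multiplier of $f_{c^*}$ has modulus $2\sqrt{|c^*|}+O(1)$ for large $|c^*|$; the cost is the continuation bookkeeping, the two-case analysis, and a weaker output (the paper's computation also records the actual decay rate of $\nu$ at infinity). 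Your continuation step does hold up: the branch points and zeros of the finitely many multiplier functions form a finite set, the identity $\sum_j\alpha_j\log g_{\mathcal O_j}=Ac+B$ persists under continuation of the germ, the continued germs remain roots of multipliers of period-$|\mathcal O_j|$ orbits, and since the $\alpha_j$ are real the logarithmic monodromy is purely imaginary and disappears when real parts are taken.
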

\begin{proof}
Since for every $j=1,\ldots,k$, the multipliers $\rho_{\mathcal O_j}$ are algebraic (multiple-valued) maps, it follows from~(\ref{def_nu_eq}) that the map $\nu$ has a single-valued meromorphic extension to any simply-connected domain $U\subset\bbC$ that avoids finitely many branching points of the maps $\rho_{\mathcal O_j}$. 
Note that none of the branching points lie on the real ray $(-\infty,-3)$, since $(-\infty,-3)\cap\M=\varnothing$. Furthermore, since for any parameter $\tl c\in (-\infty,-3)$, the corresponding Julia set $J_{\tl c}$ lies on the real line, it follows that all maps $\rho_{\mathcal O_j}$ take real values when restricted to the ray $(-\infty,-3)$. 
Choose the domain $U$ so that it is unbounded and $(-\infty,-3)\subset U$. 
Then for any $j=1,\ldots,k$, we have the same asymptotic relation
$$
\rho_{\mathcal O_j}(\tl c)\sim \pm(-4\tl c)^{|\mathcal O_j|/2},
$$
as $\tl c\to -\infty$ within the domain $U$. A direct computation yields that $\nu_{\mathcal O_j}(\tl c)\sim 1/(2\tl c)$, hence, 
$$
\nu(\tl c)\sim \frac{\sum_{j=1}^k\alpha_j}{2\tl c},
$$ 
as $\tl c\to -\infty$ within the domain $U$. Since $\sum_{j=1}^k\alpha_j\neq 0$, the latter implies that $\nu$ is not a constant map.
\end{proof}

\begin{proof}[Proof of Lemma~\ref{suff_cond_lemma}]
Since the convex hull of the points $\nu_{\mathcal O_1}(c),\ldots, \nu_{\mathcal O_k}(c)$ contains zero, it follows that there exist real non-negative constants $\alpha_1,\ldots,\alpha_k$, such that $\sum_{j=1}^k\alpha_j=1$ and the analytic map
$$
\nu:= \sum_{j=1}^k\alpha_j\nu_{\mathcal O_j},
$$
defined in some neighborhood of the point $c$, satisfies $\nu(c)=0$.

Since $c\not\in\partial\M$, this means that $c\neq -2$, so it follows from the Averaging Lemma (Lemma~\ref{averaging_lemma}) that there exists a sequence of periodic orbits $\{\mathcal O_m'\}_{m=1}^\infty\subset\Omega_c$ and a neighborhood $U\subset\bbC$ of the point $c$, such that all maps $\nu_{\mathcal O_m'}$ are defined and analytic in $U$ and
$$
\nu_{\mathcal O_m'}\to\nu\qquad\text{as }m\to\infty,\qquad\text{uniformly on $U$}.
$$
According to Proposition~\ref{not_id_zero_prop}, the map $\nu$ is not the identical zero map.
Now, since $\nu(c)=0$, it follows from Rouch\'e's Theorem that for any sufficiently large $m\in\bbN$, the map $\nu_{\mathcal O_m'}$ has a zero at some point $c_m\in U$, and the points $c_m$ can be chosen so that $\lim_{m\to\infty}c_m=c$. The latter implies that $c\in\mathcal X$.
\end{proof}

\section{Proof of Theorem~\ref{main_theorem_1}}

In this section we complete the proof of Theorem~\ref{main_theorem_1}. 

\subsection{The set $\mathcal X$ is bounded}

First, we prove the following:

\begin{lemma}\label{bounded_X_lemma}
The set $\mathcal X$ is bounded.
\end{lemma}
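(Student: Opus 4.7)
The plan is to deduce boundedness of $\mathcal X$ from Theorem~\ref{main_theorem_2}(\ref{YX_dichotomy}). Since $\M\subset\overline{B(0,2)}$, every $c$ with $|c|>2$ lies in $\bbC\setminus\M$, so it suffices to show that $0\notin\mathcal Y_c$ for all $c$ with $|c|$ sufficiently large. Unpacking the definition of $\mathcal Y_c$, this amounts to a uniform separation
$$
\inf_{\mathcal O\in\Omega_c}|\nu_{\mathcal O}(c)|>0
\quad\text{whenever }|c|\gg 1.
$$
I would deduce such a separation from the uniform asymptotic
$$
\nu_{\mathcal O}(c)=\frac{1}{2c}\bigl(1+O(|c|^{-1/2})\bigr),\qquad|c|\to\infty,
$$
with the implicit constant independent of the orbit $\mathcal O\in\Omega_c$.

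To derive this asymptotic, I would first record the explicit formula $\nu_{\mathcal O}(c)=\frac{1}{n}\sum_{i=0}^{n-1}z_i'(c)/z_i(c)$, where $z_0(c),\ldots,z_{n-1}(c)$ is the analytic continuation of the cycle and $n=|\mathcal O|$, obtained by taking logarithmic derivatives in $\rho_{\mathcal O}(c)=2^n\prod_i z_i(c)$. Implicit differentiation of the fixed-point equation $f_c^{\circ n}(z_i)=z_i$ yields $z_i'(c)=-\partial_c f_c^{\circ n}(z_i)/(\rho_{\mathcal O}(c)-1)$, and the recursion $\partial_c f_c^{\circ(k+1)}(z_0)=2z_k\partial_c f_c^{\circ k}(z_0)+1$ unfolds to
$$
\partial_c f_c^{\circ n}(z_i)=\sum_{k=0}^{n-1}\prod_{j=1}^{k}2z_{i-j}\qquad(\text{indices mod }n).
$$
Factoring out the largest term $\prod_{j=1}^{n-1}2z_{i-j}=\rho_{\mathcal O}(c)/(2z_i)$ leaves a tail $\sum_{\ell\ge 0}\prod_{j=1}^{\ell}(2z_{i-j})^{-1}$, dominated by the geometric series $\sum_{\ell\ge 0}(2\sqrt{|c|}-2)^{-\ell}=1+O(|c|^{-1/2})$. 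The lower bound $|2z_j|\ge 2\sqrt{|c|}-2$ here uses precisely the a priori inclusion $J_c\subset\{\sqrt{|c|}-1\le|z|\le\sqrt{|c|}+1\}$ already exploited in Lemma~\ref{nu_normality_lemma}.

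Combining and using the crude bound $|\rho_{\mathcal O}(c)|\ge(2\sqrt{|c|}-2)^n\gg 1$ to absorb the $-1$ in the denominator, one obtains $z_i'(c)=-(1+O(|c|^{-1/2}))/(2z_i)$. The identity $z_i^2=z_{i+1}-c=-c\bigl(1+O(|c|^{-1/2})\bigr)$, coming from the same annulus bound, then gives $z_i'(c)/z_i(c)=(1+O(|c|^{-1/2}))/(2c)$, and averaging over $i$ yields the claimed asymptotic for $\nu_{\mathcal O}(c)$. Since the right-hand side is of exact order $1/|c|$, for $|c|$ sufficiently large we have $|\nu_{\mathcal O}(c)|\ge 1/(3|c|)>0$ uniformly in $\mathcal O$, so $0\notin\mathcal Y_c$ and Theorem~\ref{main_theorem_2}(\ref{YX_dichotomy}) forces $c\notin\mathcal X$.

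The only subtle point is ensuring that every error bound is uniform in the orbit, but this is essentially automatic: every periodic point of $f_c$ sits in the same annulus $\{\sqrt{|c|}-1\le|z|\le\sqrt{|c|}+1\}$, the geometric tail above depends only on the common lower bound $|2z_j|\ge 2\sqrt{|c|}-2$, and the lower bound on $|\rho_{\mathcal O}(c)|$ only improves with $n$. The main obstacle is therefore merely bookkeeping — tracking the error terms carefully enough through the implicit differentiation and the averaging — rather than any conceptual difficulty.
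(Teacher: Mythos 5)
Your proof is correct, but it takes a genuinely different route from the paper's. The paper argues softly: for $c_0\notin\M$, each root $g_{\mathcal O}$ of the multiplier extends to a holomorphic map $\gamma_{\mathcal O}\colon\bbC\setminus\overline{\bbD}\to\bbC\setminus\overline{\bbD}$ via the double cover $\lambda\mapsto\phi_\M^{-1}(\lambda^2)$ of $\bbC\setminus\M$; normality of this family together with the growth estimate $|g_{\mathcal O}(\tl c)|=2\sqrt{|\tl c|}+O(1)$ forces every $\gamma_{\mathcal O}$ and every normal limit to have a simple pole at infinity, while a sequence of critical points $\lambda_n\to\infty$ would force a limit with at least a double pole --- a contradiction. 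You instead establish a quantitative, uniform asymptotic $\nu_{\mathcal O}(c)=\tfrac{1}{2c}\bigl(1+O(|c|^{-1/2})\bigr)$ directly from the cycle equations and the same annulus containment $J_c\subset\{\sqrt{|c|}-1\le|z|\le\sqrt{|c|}+1\}$, and then invoke part~\ref{YX_dichotomy} of Theorem~\ref{main_theorem_2}. This is legitimate and non-circular, since Theorem~\ref{main_theorem_2} is proved in Section~3 without any appeal to Lemma~\ref{bounded_X_lemma}. In fact your computation yields slightly more than you use: for $|c|>2$ all periodic orbits are repelling, so a critical point of a period-$n$ multiplier at such $c$ would mean $\nu_{\mathcal O}(c)=0$, and your uniform lower bound $|\nu_{\mathcal O}(c)|\ge 1/(3|c|)$ already shows $X_n\cap\{|c|>R\}=\varnothing$ for every $n$; thus you could bypass Theorem~\ref{main_theorem_2} entirely. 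What your approach buys is an explicit radius beyond which no critical points of multipliers exist (and an asymptotic for $\nu_{\mathcal O}$ of independent interest); what the paper's approach buys is brevity and reuse of machinery already set up in~\cite{Firsova_Gor_equi} (the global maps $\gamma_{\mathcal O}$ and their normality). Two bookkeeping remarks: in the unfolded formula for $\partial_c f_c^{\circ n}(z_i)$ the tail indices should run forward along the orbit (i.e.\ $z_{i+j}$ rather than $z_{i-j}$), which is immaterial since every cycle point satisfies the same bound $|2z_j|\ge 2\sqrt{|c|}-2$; and when absorbing the $-1$ in $\rho_{\mathcal O}(c)-1$ one should record the uniform bound $|\rho_{\mathcal O}(c)-1|\ge(2\sqrt{|c|}-2)^n-1$, which is exactly the kind of uniformity-in-$\mathcal O$ you correctly flag as automatic.
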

\begin{proof}
For a fixed parameter $c_0\in\bbC\setminus\M$, the Julia set $J_{c_0}$ of the map $f_{c_0}$ is a Cantor set, and all periodic orbits of $f_{c_0}$ are repelling. 
For any periodic orbit $\mathcal O$ of $f_{c_0}$, the locally defined map $g_{\mathcal O}$ can be extended by analytic continuation to an analytic map of a double cover of the complement of the Mandelbrot set $\M$ (see~\cite{Firsova_Gor_equi} for details). This means that if
$$
\phi_\M\colon\bbC\setminus\M\to\bbC\setminus\overline{\bbD}
$$
is a fixed conformal diffeomorphism of $\bbC\setminus\M$ onto $\bbC\setminus\overline{\bbD}$ and $\lambda_0\in\bbC\setminus\overline{\bbD}$ is a fixed point, such that $\phi_\M^{-1}(\lambda_0^2)=c_0$, then the map 
$$
\lambda\mapsto g_{\mathcal O}(\phi_\M^{-1}(\lambda^2)),
$$
defined for all $\lambda$ in a neighborhood of $\lambda_0$, extends to a global holomorphic map
$$
\gamma_\mathcal O\colon \bbC\setminus\overline{\bbD}\to \bbC\setminus\overline{\bbD}.
$$

Now assume that the statement of Lemma~\ref{bounded_X_lemma} does not hold. Then there exists a sequence of parameters $\{\lambda_n\}_{n\in\bbN}$ and a corresponding sequence of periodic orbits $\{\mathcal O_n\}_{n\in\bbN}$, such that
\begin{equation}\label{seq_eq}
\lim_{n\to\infty}\lambda_n=\infty \qquad\text{and}\qquad \gamma_{\mathcal O_n}'(\lambda_n)=0, \quad\text{for every }n\in\bbN.
\end{equation}
Since the family of maps $\{\gamma_{\mathcal O}\}$ is locally uniformly bounded, hence, normal (c.f. Proposition~5.8 from~\cite{Firsova_Gor_equi}), it follows that after extracting a subsequence, we may assume that the sequence of maps $\gamma_{\mathcal O_n}$ converges to a holomorphic map $\gamma\colon \bbC\setminus\overline{\bbD}\to \bbC\setminus\overline{\bbD}$ uniformly on compact subsets. Since for any $\tl c\in\bbC$ sufficiently close to $\infty$ and any $\mathcal O\in\Omega_{c_0}$, inequality~(\ref{g_O_double_inequality}) holds, we conclude that $\gamma$, as well as each $\gamma_{\mathcal O_n}$, are  non-constant maps that have a simple pole at infinity. On the other hand,~(\ref{seq_eq}) implies that $\gamma$ has at least a double pole at infinity, which provides a contradiction.
\end{proof}

Next, we proceed with proving the remaining statements of Theorem~\ref{main_theorem_1}. 

\subsection{The set $\mathcal X\setminus\M$}
First we study the set $\mathcal X\setminus\M$, i.e., the portion of the set $\mathcal X$ that is contained in the complement of the Mandelbrot set. We note that even though numerical computations from~\cite{Belova_Gorbovickis} together with Theorem~\ref{XoutsideM_theorem}, suggest that this set is non-empty, a rigorous computer-free proof of this fact has not been provided so far. We fill this gap by proving the following:

\begin{lemma}\label{X_minus_M_nonempty_lemma}
The set $\mathcal X\setminus\M$ has non-empty interior.
\end{lemma}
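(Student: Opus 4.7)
The plan is to apply part~\ref{YX_dichotomy} of Theorem~\ref{main_theorem_2} together with a simple openness argument. That theorem identifies $\mathcal X\cap(\bbC\setminus\M)$ with $\{c\in\bbC\setminus\M: 0\in\mathcal Y_c\}$, so it suffices to exhibit a single parameter $c_0\in\bbC\setminus\M$ and finitely many repelling periodic orbits $\mathcal O_1,\ldots,\mathcal O_k\in\Omega_{c_0}$ such that $0$ lies in the \emph{topological interior} of $\CHull\{\nu_{\mathcal O_1}(c_0),\ldots,\nu_{\mathcal O_k}(c_0)\}\subset\bbC$. Indeed, since $c_0\not\in\partial\M$, each $\mathcal O_j$ persists as a repelling orbit of $f_c$ under analytic continuation in $c$, and each map $\nu_{\mathcal O_j}$ extends holomorphically, on a common neighborhood of $c_0$. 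The condition of enclosing $0$ strictly inside the convex hull of finitely many continuously varying points is open in $c$, so this enclosure persists on an open neighborhood $U\subset\bbC\setminus\M$ of $c_0$. Then Lemma~\ref{suff_cond_lemma} (equivalently part~\ref{YX_dichotomy} of Theorem~\ref{main_theorem_2}) yields $U\subset\mathcal X\setminus\M$.

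The construction of $c_0$ is the main technical step. I would pick $c_0\in\bbC\setminus\M$ to be non-real, since for non-real $c$ the values $\nu_{\mathcal O}(c)$ are generically non-real and the set $\mathcal Y_c$ can be genuinely two-dimensional. For such $c_0$, $f_{c_0}$ is hyperbolic and topologically conjugate on its Cantor Julia set $J_{c_0}$ to the one-sided shift on two symbols, so repelling periodic orbits are indexed by periodic binary sequences and their multipliers admit explicit estimates via inverse-branch contraction. Iterated application of the Averaging Lemma (Lemma~\ref{averaging_lemma}) shows that $\mathcal Y_{c_0}$ contains the convex hull of the $\nu$-values of any finite collection of repelling orbits. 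Consequently it suffices to produce three orbits $\mathcal O_1,\mathcal O_2,\mathcal O_3$ whose values $\nu_{\mathcal O_j}(c_0)$ form a triangle strictly surrounding~$0$.

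The main obstacle is verifying this triangular enclosure at a concrete parameter. The asymptotic $\nu_{\mathcal O}(c)\sim 1/(2c)$ derived in the proof of Proposition~\ref{not_id_zero_prop} shows that for $|c|$ large all $\nu$-values cluster near $0$ from a single direction, so $c_0$ must be chosen at moderate distance from $\M$, and in particular close to (but outside) $\partial\M$. A natural way to guarantee that $0$ actually lands inside the desired triangle is to exploit the inclusion $\partial\M\subset\mathcal X$ (since $\partial\M=\mathrm{supp}(\mbif)$): as one approaches $\partial\M$ from the exterior, the accumulation values of $\nu_{\mathcal O}(c)$ should include $0$, and the convexity of $\mathcal Y_c$ from part~\ref{Y_convex_property} of Theorem~\ref{main_theorem_2} together with a generic perturbation should convert boundary enclosure into interior enclosure. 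Making this last step rigorous---either by a direct estimate on low-period orbits at a specific non-real $c_0$ close to a chosen Misiurewicz parameter, or by an abstract continuity argument along an approach path to $\partial\M$---is where the technical core of the proof lies.
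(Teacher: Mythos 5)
Your overall strategy (find one parameter at which finitely many values $\nu_{\mathcal O}(c_0)$ strictly surround $0$, then use openness of this condition in $c$ together with Lemma~\ref{suff_cond_lemma}) is exactly the skeleton of the paper's argument, but your proposal never completes the step that carries all the content: exhibiting a concrete $c_0$ and concrete orbits for which the enclosure of $0$ is actually verified. You acknowledge this yourself (``making this last step rigorous \dots is where the technical core of the proof lies''), so as written this is an outline with the decisive step missing. Moreover, the route you suggest for closing it is shaky: from $\partial\M\subset\mathcal X$ (which comes from equidistribution, i.e.\ $\partial\M=\mathrm{supp}(\mbif)$) you cannot conclude that $0$ becomes an accumulation value of $\{\nu_{\mathcal O}(c)\}$ as $c\to\partial\M$ from the exterior: the characterization $c\in\mathcal X\Leftrightarrow 0\in\mathcal Y_c$ is only proved for $c\notin\M$, and membership of a boundary point in the closed set $\mathcal X$ says nothing about nearby exterior parameters. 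Remark~\ref{where_proof_rails_remark} even indicates that at $c=-2\in\partial\M$ the set $\mathcal Y_{-2}$ appears to miss $0$, so ``approaching $\partial\M$ forces enclosure of $0$'' is not a safe heuristic, let alone a proof. Note also that Lemma~\ref{suff_cond_lemma} only asks for $0$ in the convex hull, while your perturbation argument needs $0$ in its \emph{interior}; that is fine, but it is precisely this interior enclosure that must be exhibited somewhere.

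The paper closes the gap by an explicit low-period computation at a well-chosen \emph{boundary} point rather than a non-real exterior point: by Proposition~\ref{nu_comp_prop}, at the parabolic parameter $c=-3/4$ the orbits of periods $1,2,3$ give $\nu_{\mathcal O}(-3/4)\in\{-1,\,-1/3,\,2,\,-\tfrac{10}{183}\pm\tfrac{49}{183}i\}$, whose convex hull contains $0$ in its interior; by continuity the same holds for all $c$ in a neighborhood $V$ of $-3/4$, and since $-3/4\in\partial\M$ the open set $V\setminus\M$ is nonempty, so Lemma~\ref{suff_cond_lemma} gives $V\setminus\M\subset\mathcal X$. To salvage your approach, replace the heuristic paragraph by such an explicit verification; your insistence on a non-real $c_0$ is unnecessary, since the complex-conjugate pair of period-$3$ values already makes the hull two-dimensional near a real parameter.
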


The idea of the proof of Lemma~\ref{X_minus_M_nonempty_lemma} is to show that the sufficient condition from Lemma~\ref{suff_cond_lemma} is satisfied for all $c$ in a neighborhood of the parabolic parameter $c_0=-3/4$. The rest of the proof is technical. We will need explicit formulas for the maps $\nu_{\mathcal O}$, corresponding to periodic orbits $\mathcal O$ of periods $1$, $2$ and $3$.

\begin{proposition}\label{nu_comp_prop}
Let $c_0\in\bbC$ and a corresponding periodic orbit $\mathcal O$ of $f_{c_0}$ be such that the map $\nu:= \nu_{\mathcal O}$ is defined in a neighborhood of the point $c=c_0$. Then the following holds:
\begin{enumerate}[(i)]
\item\label{nu_formula_item1} If $|\mathcal O|=1$, then 
$$
\nu(c) = \frac{2}{4c-1-\sqrt{1-4c}},
$$
where the two branches of the root correspond to the two different periodic orbits of period $1$.
\item\label{nu_formula_item2} If $|\mathcal O|=2$, then 
$$
\nu(c) = \frac{1}{2c+2}.
$$
\item\label{nu_formula_item3} If $|\mathcal O|=3$, then 
$$
\nu(c) = \frac{12c^3+37c^2+32c+7 -(c^2+6c+7)\sqrt{-4c-7}}{6(4c+7)(c^3+2c^2+c+1)},
$$
where the two branches of the root correspond to the two different periodic orbits of period $3$.
\end{enumerate}
\end{proposition}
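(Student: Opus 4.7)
The strategy in all three cases is direct computation from the definition $\nu_{\mathcal O}(c) = \rho_{\mathcal O}'(c)/(|\mathcal O|\,\rho_{\mathcal O}(c))$, using explicit polynomial descriptions of the periodic orbits.

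For part~\ref{nu_formula_item1}, the fixed points of $f_c$ are the roots $z_\pm = (1\pm\sqrt{1-4c})/2$ of $z^2-z+c=0$, so $\rho_\pm(c) = f_c'(z_\pm) = 2z_\pm = 1\pm\sqrt{1-4c}$. Differentiating gives $\rho_\pm'(c) = \mp 2/\sqrt{1-4c}$, and after rationalizing the denominator of $\rho_\pm'/\rho_\pm$ one recognizes the unified form $2/(4c-1-\sqrt{1-4c})$ with the two branches of the root corresponding to the two fixed points.

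For part~\ref{nu_formula_item2}, primitive period-$2$ points are the roots of $z^2+z+(c+1)=0$, the quotient $(f_c^{\circ 2}(z)-z)/(f_c(z)-z)$. Vieta gives $z_1z_2 = c+1$, and the chain rule yields $\rho(c) = f_c'(z_1)f_c'(z_2) = 4z_1z_2 = 4(c+1)$. Then $\nu = \rho'/(2\rho) = 4/[2\cdot 4(c+1)] = 1/(2c+2)$.

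For part~\ref{nu_formula_item3}, the approach is to compute the dynatomic polynomial $\Phi_3(z,c) := (f_c^{\circ 3}(z)-z)/(f_c(z)-z)$, which is a polynomial of degree $6$ in $z$ with constant term $c^3+2c^2+c+1$. Over an algebraic extension of $\mathbb{Q}(c)$, it factors as $\Phi_3 = Q_1Q_2$ where each cubic $Q_j(z) = z^3-a_jz^2+b_jz-c_j$ has as its roots one of the two period-$3$ cycles, with multiplier $\rho_j = 8c_j$. Matching coefficients in $\Phi_3 = Q_1Q_2$ shows that $\rho_1,\rho_2$ are the two roots of a quadratic
\[
w^2 - T(c)\,w + N(c) = 0,
\]
with $N(c) = 64(c^3+2c^2+c+1)$ coming from the constant term of $\Phi_3$ and $T(c)$ recovered from the remaining symmetric functions; its discriminant is a scalar multiple of $-4c-7$ (the classical period-$3$ discriminant responsible for the $\mathrm{Per}_3$ cusps of $\M$). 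Solving the quadratic for $\rho$ and implicit differentiation in $c$ give
\[
\rho'(c) = \frac{T'(c)\,\rho(c)-N'(c)}{2\rho(c)-T(c)},
\]
so $\nu = \rho'/(3\rho)$ is an algebraic function of $c$ lying in $\mathbb{Q}(c)[\sqrt{-4c-7}]$. Substituting the explicit form $\rho = \tfrac{1}{2}(T \pm \sqrt{T^2-4N})$ and rationalizing produces the claimed formula, with the two branches of $\sqrt{-4c-7}$ corresponding to the two period-$3$ cycles.

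\textbf{Main obstacle.} The cases $|\mathcal O|=1,2$ are short, but the period-$3$ case demands careful bookkeeping of symmetric functions of the cycle. The main pitfalls are (i) correctly identifying $T(c)$ from the expansion of $\Phi_3=Q_1Q_2$, (ii) tracking how $N'(c), T'(c)$ combine when $\rho'/\rho$ is assembled, and (iii) performing the algebraic simplification so that the denominator reduces precisely to $6(4c+7)(c^3+2c^2+c+1)$ — with the factor $4c+7$ appearing only after clearing the surd $\sqrt{-4c-7}$, and $c^3+2c^2+c+1$ coming from $\rho(c)$ in the denominator of $\nu$. Once this is done, matching to the stated rational expression is a mechanical simplification.
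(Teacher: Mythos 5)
Your parts (i) and (ii) are exactly the paper's computations (fixed points give $\rho_\pm=1\pm\sqrt{1-4c}$; the period-$2$ multiplier is $4(c+1)$, read off from the quadratic factor $z^2+z+c+1$). For part (iii) you diverge slightly: the paper simply quotes Stephenson's relation $c^3+2c^2+(1-\rho/8)c+(1-\rho/8)^2=0$, solves it to get $\rho(c)=8+4c-4c\sqrt{-4c-7}$, and differentiates, whereas you propose to derive the trace--norm quadratic $w^2-T(c)w+N(c)=0$ for the two period-$3$ multipliers from the factorization $\Phi_3=Q_1Q_2$ of the dynatomic polynomial. Your route is self-contained (indeed, substituting $\rho=8(1-u)$ into your quadratic with $T=16+8c$, $N=64(c^3+2c^2+c+1)$ recovers exactly Stephenson's equation), at the cost of the extra symmetric-function bookkeeping needed to pin down $T(c)$, which you leave implicit. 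One small correction: the discriminant $T^2-4N$ equals $-64c^2(4c+7)$, i.e.\ a \emph{polynomial} (not scalar) multiple of $-4c-7$; since $64c^2$ is a square this still gives $\sqrt{T^2-4N}=\pm 8c\sqrt{-4c-7}$, so $\rho=8+4c\pm 4c\sqrt{-4c-7}$ and your conclusion, and the remaining differentiation and simplification, go through unchanged.
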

\begin{proof}
When $|\mathcal O|=1$, i.e, $\mathcal O$ is a fixed point $z$, solving the equation $f_c(z)=z$ yields
$$
\rho_{c_0,\mathcal O}(c) = 2z=1+\sqrt{1-4c}.
$$
Then after a direct computation we get
$$
\nu(c) = \frac{\rho_{c_0,\mathcal O}'(c)}{\rho_{c_0,\mathcal O}(c)} = \frac{2}{4c-1-\sqrt{1-4c}}.
$$

When $|\mathcal O|=2$, there is only one periodic orbit of period~$2$. Its multiplier is the free term of the polynomial
$$
p(z)=\frac{4(f_c^{\circ 2}(z)-z)}{f_c(z)-z}=4z^2+4z+4(c+1).
$$
Now, a direct computation yields the formula for $\nu(c)$ in part~\ref{nu_formula_item2} of the proposition.

Finally, in the case $|\mathcal O|=3$, there are two periodic orbits of period~$3$ and according to~\cite{Stephenson_91}, the multiplier $\rho=\rho(c)$ of each of these orbits satisfies the equation
$$
c^3+2c^2+(1-\rho/8)c+(1-\rho/8)^2=0.
$$
After solving this equation for $\rho$, we obtain
$$
\rho(c) = 8+4c-4c\sqrt{-4c-7}.
$$
Then a direct computation yields the formula for $\nu(c)$ in part~\ref{nu_formula_item3} of the proposition.
\end{proof}

\begin{proof}[Proof of Lemma~\ref{X_minus_M_nonempty_lemma}]
We consider the maps $\nu_{\mathcal O}$ in a neighborhood of the point $c=-3/4$ for periodic orbits $\mathcal O$ of periods $1$, $2$ and $3$. The parameter $c=-3/4$ is the point at which the hyperbolic component of period~$2$ touches the main cardioid of the Mandelbrot set. In particular, all considered functions are defined and analytic in a neighborhood $U$ of that point.

For each $c\in U$, let $H_c$ denote the convex hull of the finite set $\{\nu_{\mathcal O}(c)\mid |\mathcal O|=1, 2, 3\}$. It follows from Proposition~\ref{nu_comp_prop} that $\nu_{\mathcal O}(-3/4)$ is equal to
\begin{itemize}
	\item $-1$ or $-1/3$, when $|\mathcal O|=1$,
	\item $2$, when $|\mathcal O|=2$,
	\item $-\cfrac{10}{183}\pm\cfrac{49}{183}i$, when $|\mathcal O|=3$,
\end{itemize}
hence, $H_{-3/4}$ contains $0$ in its interior. By continuity, it follows that the convex hull $H_c$ contains $0$, for all $c$ in some open complex neighborhood $V$ of the point $-3/4$. Since $c=-3/4$ is a parabolic parameter, it follows that $V\setminus\M$ is a nonempty open set. According to Lemma~\ref{suff_cond_lemma}, we observe that $V\setminus\M\subset\mathcal X$, which completes the proof of Lemma~\ref{X_minus_M_nonempty_lemma}.
\end{proof}

Next, we prove the following:

\begin{lemma}\label{connectedness_lemma}
The set $\mathcal X\cup\M$ is path connected.
\end{lemma}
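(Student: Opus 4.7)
The plan is to use the characterization of $\mathcal X\setminus\M$ from Theorem~\ref{main_theorem_2}(\ref{YX_dichotomy}), together with convexity of $\mathcal Y_c$ from part~(\ref{Y_convex_property}), to build paths. Since $\M$ is classically known to be path connected (indeed, simply connected, as $\bbC\setminus\M$ is conformally isomorphic to $\bbC\setminus\overline{\bbD}$ via the B\"ottcher coordinate), it suffices to show that every point $c_0\in\mathcal X\setminus\M$ can be joined to $\M$ by a path in $\mathcal X\cup\M$.

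My starting point is the open set
$$
\mathcal X^\circ:=\{c\in\bbC\setminus\M:0\in\mathrm{int}(\mathcal Y_c)\},
$$
with interior taken in $\bbR^2\cong\bbC$. Openness in $\bbC$ follows from the fact that ``$0\in\mathrm{int}(\mathcal Y_c)$'' is witnessed by finitely many orbits, i.e.\ $0\in\mathrm{int}(\CHull(\nu_{\mathcal O_1}(c),\ldots,\nu_{\mathcal O_k}(c)))$, a condition preserved under small perturbations of $c$ by continuity of each $\nu_{\mathcal O_j}$ (Lemma~\ref{nu_normality_lemma}). By Lemma~\ref{suff_cond_lemma}, $\mathcal X^\circ\subset\mathcal X$. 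Being open, $\mathcal X^\circ$ is locally path connected, and its connected components are path connected. The proof of Lemma~\ref{X_minus_M_nonempty_lemma} already exhibits an explicit open neighborhood of $-3/4\in\partial\M$ contained in $\mathcal X^\circ\cup\M$, showing that $\mathcal X^\circ$ does accumulate on $\partial\M$.

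The remaining plan has two steps. First, I would show that every $c_0\in\mathcal X\setminus\M$ lies in $\overline{\mathcal X^\circ}$: the case $0\in\mathrm{int}(\mathcal Y_{c_0})$ is immediate, and when $0\in\partial\mathcal Y_{c_0}$ I would apply the Averaging Lemma to produce new orbits whose $\nu$-values lie strictly on the opposite side of a supporting line of $\mathcal Y_{c_0}$ through $0$, so that a suitable small perturbation of $c_0$ displaces $0$ into the interior of the enlarged convex hull. Second, I would show that every connected component of $\mathcal X^\circ$ has closure meeting $\partial\M$. Assuming both, each $c_0\in\mathcal X\setminus\M$ admits a short path into a point $c_1\in\mathcal X^\circ$, and from $c_1$ a path inside $\mathcal X^\circ$ to some point of $\partial\M\subset\mathcal X$.

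\textbf{Main obstacle.} The principal difficulty lies in the second step: excluding bounded connected components of $\mathcal X^\circ$ whose closures in $\bbC$ are disjoint from $\M$. I would approach this by contradiction: the boundary of such a hypothetical ``island'' would lie entirely within the locus $\{c\in\bbC\setminus\M:0\in\partial\mathcal Y_c\}$, on which the direction normal to a supporting line of $\mathcal Y_c$ at $0$ varies continuously. Following this normal direction around a closed boundary loop of the island and combining with the convex structure of $\mathcal Y_c$, I would aim to derive a topological contradiction.
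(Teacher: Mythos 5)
Your reduction to joining each point of $\mathcal X\setminus\M$ to $\M$, and the introduction of the open set $\mathcal X^\circ=\{c\in\bbC\setminus\M : 0\in\mathrm{int}(\mathcal Y_c)\}$, are fine as far as they go, but both of the steps you defer contain genuine gaps, and the second one is exactly the global content of the lemma. For your first step, the proposed mechanism cannot work: $\mathcal Y_{c_0}$ is by definition the closure of the set of values $\nu_{\mathcal O}(c_0)$, it is convex (Lemma~\ref{averaging_lemma}), and if $0\in\partial\mathcal Y_{c_0}$ with supporting line $L$, then \emph{every} orbit value lies in the closed half-plane bounded by $L$; the Averaging Lemma only produces maps converging to convex combinations $\alpha\nu_{\mathcal O_1}+(1-\alpha)\nu_{\mathcal O_2}$, whose values at $c_0$ again lie in that half-plane, so there are no new orbits with $\nu$-values strictly on the opposite side of $L$. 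Thus the claim $\mathcal X\setminus\M\subset\overline{\mathcal X^\circ}$ is unsupported (and not obviously true: an isolated zero of a single $\nu_{\mathcal O}$ belongs to $\mathcal X$ by Theorem~\ref{XoutsideM_theorem}, but nothing forces $0$ into $\mathrm{int}(\mathcal Y_c)$ for nearby $c$). Even granting $c_0\in\overline{\mathcal X^\circ}$, membership in a closure does not by itself yield a path from $c_0$ into $\mathcal X^\circ$ staying inside $\mathcal X\cup\M$.

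The second step --- ruling out components of $\mathcal X^\circ$ whose closures miss $\M$ --- is the heart of the matter and is left as a hope; the suggested ``rotation of the normal direction'' argument has no substance yet (the supporting line of $\mathcal Y_c$ at $0$ need be neither unique nor continuously varying along the locus $\{0\in\partial\mathcal Y_c\}$). The paper avoids this issue entirely by a continuation/homotopy argument: starting from a critical point $c_0$ of some multiplier in $\bbC\setminus\M$ (such points are dense in $\mathcal X\setminus\M$ by Theorem~\ref{XoutsideM_theorem}), it interpolates $\nu_t=(1-t)\nu_{\mathcal O}+t\nu_{\mathcal O_2}$ with the period-two orbit, and considers the zero set $S\subset(D\setminus\M)\times[0,1]$ of all analytic continuations of the $\nu_t$. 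Since $\nu_1=1/(2c+2)$ never vanishes and no continuation is constant (Proposition~\ref{not_id_zero_prop}), the component of $S$ through $(c_0,0)$ is closed with $t$-projection open in $[0,1]$ and omitting $1$, so its closure must reach $\partial\M$ (it cannot reach $\partial D$ by Lemma~\ref{bounded_X_lemma}); every parameter occurring in it lies in $\mathcal X$ by the Averaging Lemma together with Theorem~\ref{main_theorem_2}\ref{YX_dichotomy}. This is the ingredient your plan is missing, and it is not recoverable from the steps you outlined.
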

\begin{proof}
First, let us note that for any $c_0\in\bbC\setminus\M$, any periodic orbit $\mathcal O_0\in\Omega_{c_0}$ of $f_{c_0}$ and any piecewise smooth curve $\gamma\colon[0,1]\to\bbC\setminus\M$, such that $\gamma(0)=c_0$, the periodic orbit $\mathcal O_0$ can be analytically continued along the curve $\gamma$. Since all periodic orbits of $f_c$ are repelling, when $c\in\bbC\setminus\M$, this defines analytic continuation of the locally defined map $\nu_{\mathcal O_0}$ along the curve $\gamma$. In particular, this means that if $\nu$ is an analytic map defined in a neighborhood of the point $c_1:=\gamma(1)$ by analytic continuation of $\nu_{\mathcal O_0}$ along $\gamma$, then there exists a periodic orbit $\mathcal O_1\in\Omega_{c_1}$ of $f_{c_1}$, such that $\nu\equiv \nu_{\mathcal O_1}$ in a neighborhood of $c_1$.

Now, according to Lemma~\ref{bounded_X_lemma}, the set $\mathcal X$ is bounded, so there exists an open disk $D\subset\bbC$, such that $\mathcal X\subset D$. 
Let $c_0\in \bbC\setminus\M$ be an arbitrary point for which there exists a periodic orbit $\mathcal O$ of the map $f_{c_0}$, such that $\nu_{\mathcal O}(c_0)=0$. 
Let $\mathcal O_2$ be the unique periodic orbit of period $2$ for the map $f_{c_0}$. Then for each $t\in[0,1]$, we consider the map
$$
\nu_t:=(1-t)\nu_{\mathcal O}+ t\nu_{\mathcal O_2},
$$
defined in a neighborhood of $c_0$. 

Let $S\subset (D\setminus\M)\times[0,1]$ be the set of all points $(c,t)\in (D\setminus\M)\times[0,1]$, such that for some analytic continuation $\tl\nu_t$ of the map $\nu_t$ to a neighborhood of the point $c$, we have $\tl\nu_t(c)=0$. Since there are finitely many different analytic continuations of $\nu_t$ to a fixed neighborhood of a point $c$, it follows that the set $S$ is closed in $(D\setminus\M)\times[0,1]$.
It also follows from the definition of the set $S$ that $(c_0,0)\in S$. Let $S_0\subset S$ be the (path)-connected component of $S$ that contains the point $(c_0,0)$. 
Since according to Proposition~\ref{not_id_zero_prop}, neither the map $\nu_t$, nor any of its analytic continuations is a constant map, this implies that 
the projection of $S_0$ onto the second coordinate is an interval $I\subset[0,1]$ that is open in $[0,1]$.
At the same time, since according to Proposition~\ref{nu_comp_prop}, the map $\nu_1=\nu_{\mathcal O_2}$ does not vanish at any point of the complex plane, it follows that $S_0\cap [(D\setminus\M)\times\{1\}]=\varnothing$, which implies that $1\not\in I$. Since interval $I$ is open in $[0,1]$ and $S_0$ is closed in $(D\setminus\M)\times[0,1]$, we conclude that the closure $\overline S_0$ of $S_0$ in $\bbC\times[0,1]$ has a nonempty intersection with the boundary $(\partial(D\setminus\M))\times[0,1]$.

Finally, note that the Averaging Lemma (Lemma~\ref{averaging_lemma}) together with part~\ref{YX_dichotomy} of Theorem~\ref{main_theorem_2}, imply that the projection of $\overline S_0$ to the first coordinate is contained in some path connected component $X$ of the set $\mathcal X\cup\M$. The latter implies that $X\cap \partial (D\setminus\M)\neq\varnothing$. Since $\mathcal X\cap\partial D=\varnothing$, we conclude that $X\cap\partial\M\neq\varnothing$, so $\M\subset X$. Since $c_0\in\bbC\setminus\M$ was an arbitrary critical point of the multiplier of an arbitrary periodic orbit, and $c_0\in X$, it follows that the path connected set $X$ is dense in $\mathcal X\cup\M$, hence, $X=\mathcal X\cup\M$, and the set $\mathcal X\cup\M$ is path connected as well.
\end{proof}

\subsection{The set $\mathcal X\cap\M$}\label{M_in_X_section}
Here we turn to the study of the portion of the set $\mathcal X$ that is contained in the Mandelbrot set. We show that the whole Mandelbrot set is contained in $\mathcal X$.

\begin{lemma}\label{Inside_M_lemma}
The inclusion $\M\subset\mathcal X$ holds.
\end{lemma}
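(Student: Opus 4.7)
The plan hinges on Lemma~\ref{suff_cond_lemma}. Since $\partial\M\subset\mathcal X$ is immediate from Theorem~\ref{equidistr_theorem} (as $\partial\M=\mathrm{supp}(\mbif)$), the remaining task is to prove $\mathrm{int}(\M)\subset\mathcal X$. For each $c\in\mathrm{int}(\M)$, I plan to exhibit finitely many repelling periodic orbits of $f_c$ whose $\nu$-values contain $0$ in their convex hull, and then invoke Lemma~\ref{suff_cond_lemma}.

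The main analytic tool I would develop is a trace-type identity involving the critical orbit. From the chain-rule formula $(f_c^{\circ n})'(z)=2^n\prod_{k=0}^{n-1}f_c^{\circ k}(z)$ combined with the Vieta observation that $\prod_{z:\,f_c^{\circ n}(z)=z}z=f_c^{\circ n}(0)$ (the constant term of the degree-$2^n$ polynomial $f_c^{\circ n}(z)-z$), one obtains $\prod_{z:\,f_c^{\circ n}(z)=z}(f_c^{\circ n})'(z)=2^{n\cdot 2^n}\bigl(f_c^{\circ n}(0)\bigr)^{n}$. Taking logarithmic derivatives in $c$ and regrouping by orbits, then applying M\"obius inversion, yields
\[
\sum_{\mathcal O:\,|\mathcal O|=n}\nu_{\mathcal O}(c)\;=\;\frac{1}{n}\sum_{d\mid n}\mu(n/d)\,\frac{(f_c^{\circ d}(0))'}{f_c^{\circ d}(0)}.
\]
For $c\in\mathrm{int}(\M)$, the sequences $\{f_c^{\circ d}(0)\}$ and its $c$-derivative $\{(f_c^{\circ d}(0))'\}$ (governed by the recursion $a_{d+1}'=2a_d a_d'+1$, $a_0'=0$) are uniformly bounded in $d$, thanks to geometric convergence to the attracting cycle inside each hyperbolic component. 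Hence the right-hand side is $O(\tau(n))$, while the number of exact period-$n$ orbits is $\sim 2^n/n$; therefore the period-$n$ average $\bar\nu_n(c)$ decays exponentially to $0$. Since $\bar\nu_n(c)$ lies in the convex hull of the finite set of period-$n$ $\nu$-values and $\mathcal Y_c$ is closed and convex (Theorem~\ref{main_theorem_2}(\ref{Y_convex_property}) applies as $-2\notin\mathrm{int}(\M)$), it follows that $0\in\mathcal Y_c$.

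Finally, to convert $0\in\mathcal Y_c$ into the \emph{finite}-convex-hull hypothesis required by Lemma~\ref{suff_cond_lemma}, I would anchor the argument with the repelling $\beta$-fixed-point orbit, whose $\nu_\beta(c)$ is an explicit nonzero analytic function of $c$ on $\M\setminus\{1/4\}$ by Proposition~\ref{nu_comp_prop}(\ref{nu_formula_item1}). The exponential vanishing of $\bar\nu_n(c)$ rules out that all period-$n$ $\nu$-values lie in an open half-plane through~$0$ uniformly bounded away from the line $\{\mathrm{Re}(\overline{\nu_\beta(c)}\,z)=0\}$; a separating-hyperplane argument, combined with the Averaging Lemma to produce adjustment orbits via convex combinations that are not identically zero (Proposition~\ref{not_id_zero_prop}), then supplies a finite subcollection of repelling orbits (including $\{\beta\}$) whose $\nu$-values enclose~$0$ in their convex hull, so that Lemma~\ref{suff_cond_lemma} concludes $c\in\mathcal X$.

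The main obstacles I anticipate are two. First, one must handle parameters near superattracting centers of hyperbolic components, where $f_c^{\circ d}(0)=0$ makes the right-hand side of the trace identity formally singular and the attracting-orbit $\nu$-value has a pole; this is overcome by excising a small neighborhood of each center, using that the identity holds by continuity on its complement and that the sufficient condition of Lemma~\ref{suff_cond_lemma} is an open condition in $c$. Second, and more delicate, is the geometric step of upgrading $0\in\mathcal Y_c$ to $0$ in a \emph{finite} convex hull: if $\mathcal Y_c$ were to degenerate to a lower-dimensional convex set with $0$ on its boundary, the separating-hyperplane argument would yield only an asymptotic, not exact, inclusion. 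To bypass this, I would use Proposition~\ref{not_id_zero_prop} and the Averaging Lemma to enlarge the candidate collection, forcing genuine straddling of $0$.
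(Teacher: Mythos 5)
Your overall plan is the same as the paper's: the Vieta/logarithmic-derivative identity you derive is exactly (a M\"obius-inverted form of) Lemma~\ref{summation_lemma}, and the strategy of deducing a contradiction with the separating half-plane and then invoking Lemma~\ref{suff_cond_lemma} is the paper's. However, two steps fail as written. First, the uniform boundedness in $d$ of $(f_c^{\circ d}(0))'/f_c^{\circ d}(0)$: the numerator is bounded by normality and Cauchy estimates at any interior point of $\M$, but the denominator must be bounded away from zero, and your justification (geometric convergence of the critical orbit to an attracting cycle) only covers hyperbolic components. If $c$ lies in a queer component --- whose nonexistence is an open problem --- then $f_c$ has no bounded Fatou components, so $0\in J_c$ and the critical orbit may recur to $0$; no lower bound on $|f_c^{\circ d}(0)|$ is available, and your ``$O(\tau(n))$'' estimate is unjustified there. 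The paper sidesteps this entirely by never asking for a bound for all $d$: it extracts a subsequence $k_m$ with $F_{k_m}(c_0)\to w\neq 0$ (replacing $k_m$ by $k_m+1$ if the limit were $0$) and runs the identity only along that subsequence.

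The more serious gap is the endgame. Arguing by contradiction, all values $\nu_{\mathcal O}(c)$, $\mathcal O\in\Omega_c$, lie in a closed half-plane $H$ with $0\in\partial H$ (and $H$ can be chosen, from the negation of the finite-hull condition, so that some $\nu_{\mathcal O_1}(c)$ lies strictly inside $H$ --- this choice has to be made; the nonvanishing of $\nu_\beta(c)$ does not provide it, since $\nu_\beta(c)$ could lie on $\partial H$, and the separating line has no a priori relation to the line $\{\Re(\overline{\nu_\beta(c)}\,z)=0\}$). From decay of the period-$n$ \emph{average} you can only conclude that \emph{some} period-$n$ value comes close to $\partial H$; that is perfectly consistent with the hypothesis, produces no value outside $H$, and hence no finite subcollection enclosing $0$. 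What is actually needed is: (i) that \emph{every} orbit of exact period $k_m$ has $\nu$-value close to $\partial H$ --- this comes from the decay of the full \emph{sum} in Lemma~\ref{summation_lemma}, where exact-period-$k_m$ orbits enter with weight $1$ and the distances to $\partial H$ of terms in $H$ add up (this is Proposition~\ref{assumption_prop}); and (ii) a mechanism producing, for each large $m$, a repelling orbit of period \emph{exactly} $k_m$ whose $\nu$-value stays at a definite distance inside $H$, e.g.\ near $\tfrac12(\nu_{\mathcal O_1}(c)+\nu_{\mathcal O_2}(c))$. The plain Averaging Lemma (Lemma~\ref{averaging_lemma}) gives no control on the periods of the orbits it produces, so it cannot contradict a statement about period-$k_m$ orbits; the paper needs the refinement Proposition~\ref{averaging_extension_prop} (orbits of every sufficiently large exact period, built from two orbits of coprime periods) precisely at this point. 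Your sketch contains neither (i) (you discarded the sum for the average) nor (ii), so the contradiction is not reached. The treatment of superattracting centers is a minor point: one concludes for non-critically-periodic interior parameters and uses that these are dense together with closedness of $\mathcal X$, rather than an ``openness of the sufficient condition'' argument.
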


Before proving Lemma~\ref{Inside_M_lemma}, we need several additional results.

For any $c\in\bbC$ and any $k\in\bbN$, let $\Omega_c^k$ be the set of all periodic orbits of period $k$ for the map $f_c$. (In particular, $\Omega_c^k$ may contain a non-repelling orbit, if it exists.)

\begin{lemma}\label{summation_lemma}
Let $c_0\in\bbC$ be an arbitrary parameter that is neither parabolic, nor critically periodic. Then for any $k\in\bbN$, and the corresponding function $F_k(c):= f_c^{\circ(k-1)}(c)$, the following holds:
\begin{equation}\label{g_k_c_formula}
\frac{F_k'(c_0)}{kF_k(c_0)} = \sum_{m\in\bbN, m|k}\,\,\sum_{\mathcal O\in\Omega_{c_0}^m}\frac{m}{k}\nu_{\mathcal O}(c_0),
\end{equation}
where the summation goes over all $m\in\bbN$, such that $m$ divides $k$ and over all periodic orbits $\mathcal O\in\Omega_{c_0}^m$.
\end{lemma}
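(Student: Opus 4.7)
The plan is to express $F_k$ as a product over periodic points of $f_{c_0}$ whose period divides $k$, take a logarithmic derivative in $c$, and regroup the resulting sum by orbit in order to match the right-hand side of~(\ref{g_k_c_formula}).

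First I would use $f_c(0)=c$ to rewrite
$$
F_k(c) \;=\; f_c^{\circ(k-1)}(c) \;=\; f_c^{\circ k}(0) \;=\; P_k(0,c),\qquad P_k(z,c):=f_c^{\circ k}(z)-z.
$$
Viewed as a polynomial in $z$, the map $P_k(\cdot,c)$ is monic of degree $2^k$, and its roots are exactly the periodic points of $f_c$ of period dividing $k$. Since $c_0$ is not parabolic, every such periodic point of $f_{c_0}$ is a \emph{simple} root of $P_k(\cdot,c_0)$, so by the implicit function theorem all $2^k$ roots extend to single-valued analytic functions $c\mapsto z_*(c)$ on a common neighborhood of $c_0$, and
$$
P_k(z,c) \;=\; \prod_{m\mid k}\,\prod_{\mathcal O\in\Omega_{c_0}^m}\,\prod_{z_*\in\mathcal O}(z-z_*(c)).
$$
Evaluating at $z=0$ and using $(-1)^{2^k}=1$ for $k\ge 1$ gives $F_k(c)=\prod z_*(c)$. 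The non-critically-periodic hypothesis guarantees $z_*(c_0)\ne 0$ for every factor, so $F_k(c_0)\ne 0$ and every logarithmic derivative below is defined.

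Next I would take the logarithmic derivative of this product and regroup by orbit:
$$
\frac{F_k'(c_0)}{F_k(c_0)} \;=\; \sum_{m\mid k}\,\sum_{\mathcal O\in\Omega_{c_0}^m}\,\sum_{z_*\in\mathcal O}\frac{z_*'(c_0)}{z_*(c_0)}.
$$
To bring in $\nu_{\mathcal O}$ I would use that $f_c'(z)=2z$, which lets one factor the multiplier of an orbit $\mathcal O$ of period $m$ as $\rho_{\mathcal O}(c)=2^m\prod_{z_*\in\mathcal O}z_*(c)$. Since $c_0$ is neither parabolic nor critically periodic, $\rho_{\mathcal O}$ is analytic and nonvanishing at $c_0$, so
$$
\frac{\rho_{\mathcal O}'(c_0)}{\rho_{\mathcal O}(c_0)} \;=\; \sum_{z_*\in\mathcal O}\frac{z_*'(c_0)}{z_*(c_0)} \;=\; m\,\nu_{\mathcal O}(c_0)
$$
by the definition~(\ref{def_nu_eq}). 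Substituting back into the preceding display and dividing both sides by $k$ produces exactly~(\ref{g_k_c_formula}).

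The only delicate point is the factorization $P_k(z,c)=\prod(z-z_*(c))$ with single-valued analytic $z_*(c)$, which is precisely where the non-parabolic hypothesis is needed: if a root of $P_k(\cdot,c_0)$ had higher multiplicity, moving $c$ would force a branch point and the individual $z_*$ could not be chosen as analytic germs at $c_0$. The non-critically-periodic hypothesis is used only so that $0$ lies on no orbit of period dividing $k$, which in turn makes each $z_*(c_0)$ and each $\rho_{\mathcal O}(c_0)$ nonzero, keeping the logarithmic derivatives meaningful.
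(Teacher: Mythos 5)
Your proof is correct and follows essentially the same route as the paper: expressing $F_k(c)$ via Vieta's formulas as the product of the fixed points of $f_c^{\circ k}$ (equivalently, of the multipliers up to the factor $2^{-2^k}$, using $f_c'(z)=2z$), and then taking logarithmic derivatives, with the non-parabolic hypothesis ensuring simple roots and the non-critically-periodic hypothesis ensuring nonvanishing. You merely spell out the analytic continuation of the roots and the regrouping by orbits that the paper leaves as a ``direct computation.''
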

\begin{proof}
For every $k\in\bbN$, it follows from Vieta's formulas that $F_k(c_0)$ is the product of all fixed points of the map $f_{c_0}^{\circ k}$, counted with multiplicities. Since $c_0$ is a non-parabolic parameter, all of these fixed points have multiplicity one, hence we have
\begin{equation}\label{g_k_c_auxiliary_formula}
F_k(c_0) = 2^{-2^{k}}\prod_{m\in\bbN, m|k}\,\,\prod_{\mathcal O\in\Omega_{c_0}^m}\rho_{\mathcal O}(c_0).
\end{equation}
Since the parameter $c_0$ is not critically periodic, we have $F_k(c_0)\neq 0$, and for any periodic orbit $\mathcal O$ of $f_{c_0}$, the map $\nu_{\mathcal O}$ is defined and analytic in some fixed neighborhood of the point $c_0$. This implies that both the left hand side and the right hand side of~(\ref{g_k_c_formula}) are defined. Finally, the identity~(\ref{g_k_c_formula}) can be obtained from~(\ref{g_k_c_auxiliary_formula}) by a direct computation.
\end{proof}

Next, we prove a slightly refined version of the Averaging Lemma:

\begin{proposition}\label{averaging_extension_prop}
Under the conditions of Lemma~\ref{averaging_lemma}, if the periods of the periodic orbits $\mathcal O_1$ and $\mathcal O_2$ are relatively prime, then the sequence of repelling periodic orbits $\{\mathcal O_j\}_{j=3}^\infty$ from Lemma~\ref{averaging_lemma} can be chosen so that $|\mathcal O_j|=j$, for any $j\ge 3$.
\end{proposition}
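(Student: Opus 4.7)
The plan is to reuse the construction from the proof of Lemma~\ref{averaging_lemma}, but to arrange the integer parameters $N_1$ and $N_2$ so that the resulting orbit has period exactly $j$ for each $j$ in the tail of the sequence. Recall that for each pair $(N_1, N_2)$ with $N_1, N_2 \ge N$, the construction produces a repelling periodic orbit $\mathcal O_{N_1, N_2}$ of $f_{c_0}$ of period $M(N_1, N_2) = n_1 N_1 + n_2 N_2 + k_1 + k_2$, and the convergence~(\ref{g_c_OO_convergence_eq}) holds whenever $M \to \infty$ together with $n_1 N_1/(n_1 N_1 + n_2 N_2) \to \alpha$. So it suffices to show that for every sufficiently large $j$ one can choose $(N_1(j), N_2(j))$ with $M(N_1(j), N_2(j)) = j$ and $n_1 N_1(j)/(n_1 N_1(j) + n_2 N_2(j)) \to \alpha$ as $j \to \infty$.

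The key input is the Frobenius--Sylvester ``coin problem'' fact: since $\gcd(n_1, n_2) = 1$, there is a threshold $L_0$ such that every integer $L \ge L_0$ can be written as $L = n_1 a + n_2 b$ with $a, b \ge N$, and the full set of such representations is an arithmetic progression in $a$ with common difference $n_2$. Thus for every integer $j \ge J_0 := L_0 + k_1 + k_2$ we set $L := j - k_1 - k_2$ and choose $(N_1(j), N_2(j))$ to be a solution of $n_1 N_1 + n_2 N_2 = L$ with $N_1, N_2 \ge N$ that minimizes $|n_1 N_1 - \alpha L|$. Since consecutive valid values of $N_1$ differ by $n_2$, this minimum is at most $n_1 n_2 / 2$, so
$$
\left|\frac{n_1 N_1(j)}{n_1 N_1(j) + n_2 N_2(j)} - \alpha \right| \le \frac{n_1 n_2}{2L} \to 0
$$
as $j \to \infty$; the extreme cases $\alpha \in \{0,1\}$ are handled by simply picking $N_1$ minimal (respectively $N_2$ minimal) in the feasible lattice.

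Setting $\mathcal O_j := \mathcal O_{N_1(j), N_2(j)}$ for all $j \ge J_0$ then yields a sequence of repelling periodic orbits of $f_{c_0}$ of period exactly $j$, and the computation at the end of the proof of Lemma~\ref{averaging_lemma} gives uniform convergence $\nu_{\mathcal O_j} \to \alpha \nu_{\mathcal O_1} + (1-\alpha)\nu_{\mathcal O_2}$ on $U$. For the finitely many indices $3 \le j < J_0$, we take $\mathcal O_j$ to be any repelling periodic orbit of $f_{c_0}$ of exact period $j$; this is possible because a standard M\"obius-function count shows that $f_{c_0}$ has at least two distinct cycles of exact period $j$ for every $j \ge 3$, while classical Fatou theory forces at most one cycle of $f_{c_0}$ to be non-repelling. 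Distinctness of all the orbits $\mathcal O_j$ is then automatic from the distinctness of their periods.

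I expect the only real subtlety to be the coin-problem bookkeeping, namely the verification that among all representations $L = n_1 N_1 + n_2 N_2$ with $N_1, N_2 \ge N$, one can always find one whose ratio $n_1 N_1/L$ lies within $O(1/L)$ of $\alpha$. This becomes routine once one fixes a base solution and slides along the lattice of solutions in steps of $n_2$ in the $N_1$-direction.
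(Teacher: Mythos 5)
Your argument is correct and takes essentially the same route as the paper's proof: reuse the construction from Lemma~\ref{averaging_lemma} and, using $\gcd(n_1,n_2)=1$, solve $j=n_1N_1+n_2N_2+k_1+k_2$ while sliding along the lattice of solutions (whose ratios $n_1N_1/(j-k_1-k_2)$ are spaced by $n_1n_2/(j-k_1-k_2)\to 0$) to force $n_1N_1/(n_1N_1+n_2N_2)\to\alpha$. Your extra bookkeeping for $\alpha\in\{0,1\}$ and for the finitely many initial indices $3\le j<J_0$ is harmless and only makes explicit what the paper leaves implicit.
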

\begin{proof}
Here we refer to the proof of the Averaging Lemma (Lemma~\ref{averaging_lemma}). Define $n_1:=|\mathcal O_1|$ and $n_2:= |\mathcal O_2|$. It was shown that there exist constants $k_1, k_2\in\bbN$ (that depend on $c_0$, $\mathcal O_1$ and $\mathcal O_2$), such that the sequence of orbits $\{\mathcal O_j\}_{j=3}^\infty$ can be chosen to satisfy the following:
$$
|\mathcal O_j|=n_1N_{1,j}+n_2N_{2,j}+k_1+k_2,
$$
for some $N_{1,j},N_{2,j}\in\bbN$, where 
\begin{equation}\label{N_j_conditions}
N_{1,j},N_{2,j}\to\infty\quad\text{and}\quad\frac{n_1N_{1,j}}{n_1N_{1,j}+n_2N_{2,j}}\to\alpha,\qquad\text{as }j\to\infty.
\end{equation}
In order to prove the proposition, it is sufficient to show that for every $\alpha\in[0,1]$, there exist two sequences $\{N_{1,j}\}_{j=3}^\infty$, $\{N_{2,j}\}_{j=3}^\infty$ of positive integers that satisfy~(\ref{N_j_conditions}) and such that 
$$
j=n_1N_{1,j}+n_2N_{2,j}+k_1+k_2,
$$
for all sufficiently large $j\in\bbN$.

It follows from elementary number theory that for every sufficiently large $j\in\bbN$, the  Diophantine equation
\begin{equation}\label{N12_conditions}
j=n_1N_{1}+n_2N_{2}+k_1+k_2,
\end{equation}
has a solution $(N_1, N_2)=(K_1, K_2)\in\bbN^2$ in positive integers. 
Furthermore, the set of all pairs $(N_1,N_2)\in\bbN^2$, satisfying~(\ref{N12_conditions}), can be described as
$$
\mathcal N_j = \{(K_1-sn_2, K_2+sn_1)\mid s\in\bbZ, \text{ and }K_1-sn_2, K_2+sn_1>0\},
$$
so the set of all fractions
$$
\frac{n_1N_1}{n_1N_1+n_2N_2}=\frac{n_1N_1}{j-k_1-k_2},
$$
such that $(N_1,N_2)\in\mathcal N_j$, will consist of the real number $n_1N_1/(j-k_1-k_2)$ and all other rational numbers from $(0,1)$ that differ from the first number by an integer multiple of $\theta_j=n_1n_2/(j-k_1-k_2)$. 
Now since $\theta_j\to 0$ as $j\to\infty$, it follows that for every sufficiently large $j\in\bbN$, one can choose a pair $(N_{1,j},N_{2,j})\in\mathcal N_j$ so that~(\ref{N_j_conditions}) holds.
\end{proof}

\begin{proof}[Proof of Lemma~\ref{Inside_M_lemma}]
It was shown in~\cite{Firsova_Gor_equi} that $\partial\M\subset\mathcal X$, so we only need to show that the interior of $\M$ is contained in $\mathcal X$. 
Let $c_0\in \M$ be a non-critically periodic interior point of the Mandelbrot set. We note that $c_0$ belongs to either a hyperbolic or a queer component, in case if the latter ones exist. For each $k\in\bbN$, consider the map $F_k\colon\bbC\to\bbC$ defined by the formula
$$
F_k(c):= f_c^{\circ(k-1)}(c).
$$
Since $c_0\in\M$, the sequence $\{F_k(c_0)\}_{k=1}^\infty$ is bounded, hence, there exists a subsequence $\{k_m\}_{m\in\bbN}\subset\bbN$, such that the limit $\lim_{m\to\infty} F_{k_m}(c_0)$ exists and is equal to some number $w\in\bbC$. We may assume that $w\neq 0$. Otherwise, if $w=0$, then take the subsequence $\{k_m+1\}_{m\in\bbN}$ instead of the subsequence $\{k_m\}_{m\in\bbN}$. Since $c_0$ is an interior point of $\M$, the family of maps $\{F_{k_m}\}_{m\in\bbN}$ is normal, when restricted to some open neighborhood $U$ of $c_0$, so after further extracting a subsequence, we may assume that the sequence of functions $\{F_{k_m}\}_{m\in\bbN}$ converges to some holomorphic function $F\colon U\to\bbC$ on compact subsets of $U$. 

Let us assume that $c_0\not\in\mathcal X$. Then, according to Lemma~\ref{suff_cond_lemma}, there exists a closed half-plane $H\subset\bbC$, such that $0\in\partial H$ and for any repelling periodic orbit $\mathcal O\in\Omega_{c_0}$, we have $\nu_{\mathcal O}(c_0)\in H$. 
For any $z\in H$, let $\dist(z,\partial H)$ denote the Euclidean distance from $z$ to the boundary line $\partial H$ of $H$. 
Then under the above assumption, the following holds:

\begin{proposition}\label{assumption_prop}
Assume, the set $\M\setminus\mathcal X$ is nonempty and $c_0\in\M\setminus\mathcal X$. Let the half-plane $H$ and the sequence $\{k_m\}_{m\in\bbN}$ be the same as above. Then for any $\varepsilon>0$ there exists $M=M(\varepsilon)\in\bbN$, such that for any $m\ge M$ and any periodic orbit $\mathcal O\in\Omega_{c_0}$ of period $|\mathcal O|=k_m$, the inequality
$$
\dist(\nu_{\mathcal O}(c_0),\partial H)<\varepsilon.
$$
holds.
\end{proposition}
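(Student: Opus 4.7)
My plan is to apply the divisor-sum identity of Lemma~\ref{summation_lemma} at $k=k_m$ and to use the fact that $c_0$ is interior to $\M$ to force the left-hand side to decay like $1/k_m$. Writing the half-plane as $H=\{z\in\bbC:\Re(\bar\alpha z)\ge 0\}$ for a unit vector $\alpha\in\bbC$, the hypothesis $c_0\in\M\setminus\mathcal X$ combined with Lemma~\ref{suff_cond_lemma} gives $\Re(\bar\alpha\,\nu_{\mathcal O}(c_0))\ge 0$ for every $\mathcal O\in\Omega_{c_0}$, and $\dist(z,\partial H)=\Re(\bar\alpha z)$ whenever $z\in H$, so the conclusion reduces to showing that $\Re(\bar\alpha\,\nu_{\mathcal O}(c_0))\to 0$ uniformly over the (repelling) orbits of period $k_m$.

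The first step is to apply Lemma~\ref{summation_lemma} (valid because an interior, non-critically-periodic parameter is automatically non-parabolic) to write
$$
S_{k_m}\ :=\ \frac{F'_{k_m}(c_0)}{k_m\,F_{k_m}(c_0)}\ =\ \sum_{d\mid k_m}\ \sum_{\mathcal O\in\Omega^{d}_{c_0}}\frac{d}{k_m}\,\nu_{\mathcal O}(c_0).
$$
By normality of $\{F_{k_m}\}$ on a neighborhood of $c_0$ together with $F_{k_m}(c_0)\to w\neq 0$, the numerator $F'_{k_m}(c_0)$ stays bounded and the denominator $F_{k_m}(c_0)$ stays bounded away from zero along the subsequence, so the factor $1/k_m$ forces $S_{k_m}\to 0$. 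Pairing with $\bar\alpha$ and taking real parts gives
$$
\sum_{d\mid k_m}\ \sum_{\mathcal O\in\Omega^{d}_{c_0}}\frac{d}{k_m}\,\Re(\bar\alpha\,\nu_{\mathcal O}(c_0))\ \longrightarrow\ 0.
$$

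Every summand on the left is nonnegative, with at most one exception: if $c_0$ lies in a hyperbolic component, the unique attracting orbit $\mathcal O_{\mathrm{att}}$ of some fixed period $p$ contributes, when $p\mid k_m$, a term $(p/k_m)\,\Re(\bar\alpha\,\nu_{\mathcal O_{\mathrm{att}}}(c_0))=O(1/k_m)\to 0$; in a queer component (if such a thing exists) every periodic orbit is repelling and no correction is needed. Subtracting this negligible exceptional term leaves a sum $\sigma_m$ of nonnegative quantities with $\sigma_m\to 0$, and since each summand is bounded by the whole sum, for any repelling $\mathcal O\in\Omega^{k_m}_{c_0}$ we get
$$
\Re(\bar\alpha\,\nu_{\mathcal O}(c_0))\ =\ \frac{k_m}{k_m}\,\Re(\bar\alpha\,\nu_{\mathcal O}(c_0))\ \le\ \sigma_m.
$$
For $m$ large enough that $k_m\neq p$, every orbit of period $k_m$ is repelling, hence $\dist(\nu_{\mathcal O}(c_0),\partial H)\le\sigma_m<\varepsilon$ uniformly in $\mathcal O$, as desired. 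The one place that requires honest care is isolating the single non-repelling orbit so that the remaining sum really consists of nonnegatives; the rest is the formal mechanics of a sum of nonnegatives tending to zero.
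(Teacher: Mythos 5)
Your proof is correct and follows essentially the same route as the paper: apply Lemma~\ref{summation_lemma} along $\{k_m\}$, use boundedness of $F'_{k_m}(c_0)$ and $F_{k_m}(c_0)\to w\neq 0$ to get the weighted sum tending to $0$, isolate the at most one non-repelling orbit whose contribution is $O(1/k_m)$, and then exploit that the remaining summands are nonnegative in the direction normal to $\partial H$. The only cosmetic difference is that you bound each individual period-$k_m$ term by the nonnegative total, while the paper phrases the same estimate via the sub-sum over $\Omega_{c_0}^{k_m}$.
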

\begin{proof}
According to Lemma~\ref{summation_lemma}, 
\begin{equation*}
\lim_{m\to\infty} \sum_{j\in\bbN, j|k_m}\,\,\sum_{\mathcal O\in\Omega_{c_0}^j}\frac{j}{k_m}\nu_{\mathcal O}(c_0) = \lim_{m\to\infty} \frac{F_{k_m}'(c_0)}{k_mF_{k_m}(c_0)} = \lim_{m\to\infty} \frac{F'(c_0)}{k_mw}=0,
\end{equation*}
where $F$ is the limiting map of the sequence of maps $\{F_{k_m}\}_{m\in\bbN}$, and $w=F(c_0)\neq 0$. Note that for all but possibly one non-repelling orbit $\hat{\mathcal O}$ of fixed period $\hat j$, the terms in the above summation belong to $H$. As $k_m\to\infty$, the contribution $\frac{\hat j}{k_m}\nu_{\hat{\mathcal O}}$ of this non-repelling orbit in the summation goes to zero.
Then it follows that
\begin{equation*}
\lim_{m\to\infty} \dist\left(\sum_{\mathcal O\in\Omega_{c_0}^{k_m}}\nu_{\mathcal O}(c_0),\,\, \partial H\right) =0,
\end{equation*}
which implies Proposition~\ref{assumption_prop}.
\end{proof}

Finally, we complete the proof of Lemma~\ref{Inside_M_lemma} by observing that under the above assumption $c_0\not\in\mathcal X$, according to Lemma~\ref{suff_cond_lemma}, the half-plane $H$ can be chosen so that for at least one repelling periodic orbit $\mathcal O_1\in\Omega_{c_0}$, the value $\nu_{\mathcal O_1}(c_0)$ lies in the interior of $H$. Let $\mathcal O_2\in\Omega_{c_0}$ be any other repelling periodic orbit whose period is relatively prime to the period of $\mathcal O_1$. Then according to Lemma~\ref{averaging_lemma} and Proposition~\ref{averaging_extension_prop} with the parameter $\alpha$ fixed at $\alpha=1/2$, it follows that for each sufficiently large $m\in\bbN$, there exists a periodic orbit $\mathcal O\in\Omega_{c_0}$ of period $k_m$, such that 
$$
\dist(\nu_{\mathcal O}(c_0),\partial H) > \frac{1}{3}\dist(\nu_{\mathcal O_1}(c_0),\partial H).
$$ 
The latter contradicts to Proposition~\ref{assumption_prop}, hence the assumption $c_0\not\in\mathcal X$ was false. Since $c_0$ was an arbitrary non-critically periodic parameter from the interior of $\M$, and critically periodic parameters form a nowhere dense subset of $\M$, this completes the proof of Lemma~\ref{Inside_M_lemma}.
\end{proof}

\begin{proof}[Proof of Theorem~\ref{main_theorem_1}]
The proof is a combination of several lemmas: 
We have $\M\subset\mathcal X$ due to Lemma~\ref{Inside_M_lemma}. The set $\mathcal X$ is bounded according to Lemma~\ref{bounded_X_lemma}. From Lemma~\ref{connectedness_lemma} and Lemma~\ref{Inside_M_lemma} it follows that the set $\mathcal X$ is path connected. Finally Lemma~\ref{X_minus_M_nonempty_lemma} implies that the set $\mathcal X\setminus\M$ has nonempty interior.
\end{proof}

\appendix

\section{Pictures}\label{Pic_section}

\begin{figure}[h]
\begin{center}
\includegraphics[width=\textwidth]{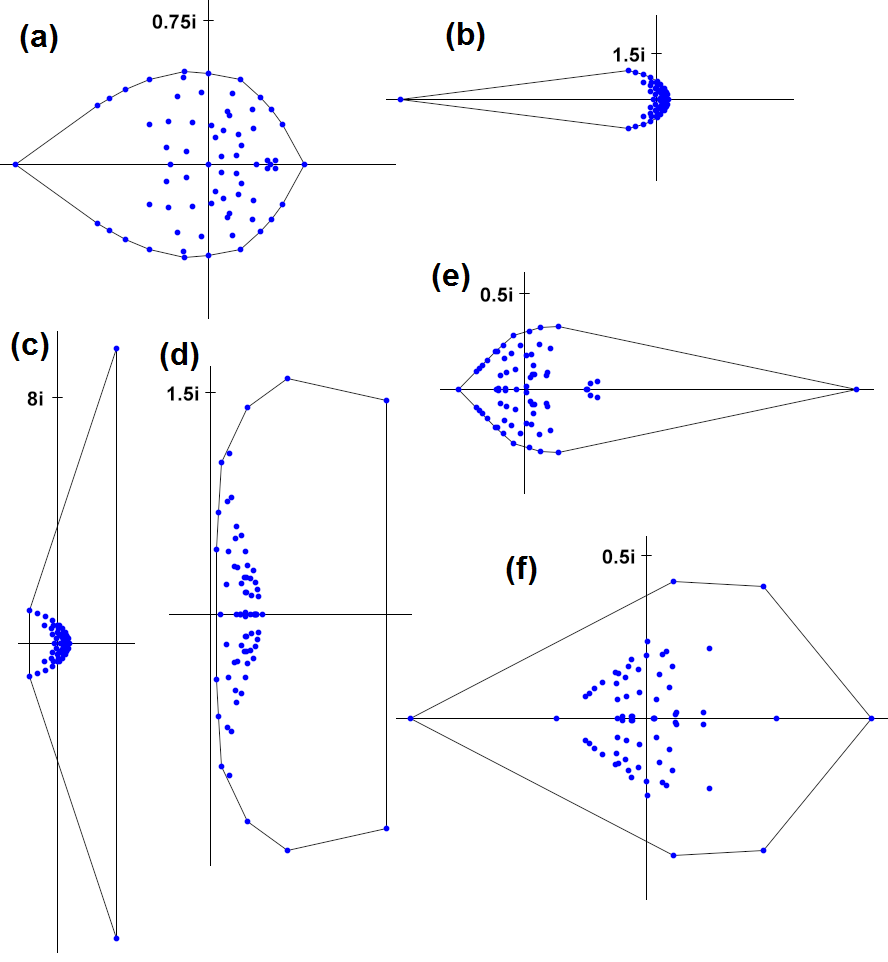}
\caption{Approximations of the sets $\mathcal Y_c$ as $c$ changes along the real axis: (a) $c=0$; (b) $c=0.24$; (c) $c=0.26$; (d) $c=0.42$; (e) $c=-0.71$; (f) $c=-1$.}\label{Y_c_pic}
\end{center}
\end{figure}

Theorem~\ref{main_theorem_1} and Theorem~\ref{main_theorem_2} provide efficient algorithms for constructing numerical approximations of the accumulation set $\mathcal X$ and the sets $\mathcal Y_c$.
For example, in order to approximate numerically the set $\mathcal X$, we first observe that according to Theorem~\ref{main_theorem_1}, the inclusion $\M\subset\mathcal X$ holds, so one only has to decide for each point $c\in\bbC\setminus\M$, whether it belongs to $\mathcal X$ or not. 
The latter can be done by means of Theorem~\ref{main_theorem_2} which provides an easy to check sufficient condition for $c\in\mathcal X$. More specifically, for each $c\in\bbC\setminus\M$, one should compute the points $\nu_{\mathcal O}(c)$, where $\mathcal O$ runs over different periodic orbits of the map $f_c$. If at some point $0$ falls into the convex hull of the computed points, then $c\in\mathcal X$. The periodic orbits of $f_c$ can in turn be computed by Newton's method (see~\cite{Hubbard_Schleicher_Sutherland} for a precise algorithm).

Figure~\ref{X_picture} is obtained by checking all periodic orbits of periods up to and including~8. The color of a point corresponds to the smallest period, up to which the periodic orbits need to be checked in order to confirm that $c\in\mathcal X$. The dark red strip in Figure~\ref{X_picture}, corresponding to period~8, is quite thin, so we hope that the picture gives a reasonably good approximation of the accumulation set $\mathcal X$ in Hausdorff metric, however, we don't know how to estimate the discrepancy. In particular, it is not clear whether the described algorithm can be used in order to numerically understand the fine structure of the boundary $\partial\mathcal X$.

Part~\ref{Y_convex_property} of Theorem~\ref{main_theorem_2} also allows to estimate numerically the sets $\mathcal Y_c$. Indeed, for any $c\in\bbC\setminus\{-2\}$, an approximation of $\mathcal Y_c$ can be constructed by taking the convex hull of the points $\nu_{\mathcal O}(c)$, where $\mathcal O$ runs over different periodic orbits of the map $f_c$. Figure~\ref{Y_c_pic} provides several pictures of the sets $\mathcal Y_c$, where the parameter $c$ takes different values on the real line. In particular, (a) and (f) correspond to the centers of the main cardioid and the hyperbolic component of period~2 respectively, and (b) and (c) correspond to the parameter $c$ lying slightly to the left and respectively slightly to the right of the cusp of the main cardioid. The blue dots are the values of $\nu_{\mathcal O}(c)$, for all repelling periodic orbits $\mathcal O$ of periods up to and including~8. We don't know, how accurate these pictures are, since inclusion of periodic orbits of higher periods can potentially change the convex hulls significantly.

\bibliographystyle{amsalpha}
\bibliography{biblio}

\end{document}